\newtheorem{Theo}{Theorem}[section]
\newtheorem*{Theor}{Theorem}
\newtheorem{Prop}[Theo]{Proposition}
\newtheorem*{Propo}{Proposition}
\newtheorem{Cor}[Theo]{Corollary}
\newtheorem{Lemma}[Theo]{Lemma}
\newtheorem{Conj}[Theo]{Conjecture}
\theoremstyle{definition}
\newtheorem{Remark}[Theo]{Remark}
\def\mystrut(#1,#2){\vrule height #1pt depth #2pt width 0pt}
\newcommand{\Hom}{{\rm Hom}}
\newcommand{\Ext}{{\rm Ext}}
\newcommand{\mmod}{{\rm mod}\,}
\newcommand{\Z}{\mathbb{Z}}
\newcommand{\C}{\mathcal{C}}
\newcommand{\T}{\mathcal{T}}
\begin{document}

\title[Homological behavior of idempotent subalgebras]{Homological behavior of idempotent subalgebras and Ext algebras}

\author{Charles Paquette}
\address{Department of Mathematics and Computer Science, Royal Military College of Canada, Kingston, ON, K7K 7B4, Canada}
\email{charles.paquette@usherbrooke.ca}

\author{Colin Ingalls}
\address{Department of Mathematics and Statistics, University of New Brunswick, Fredericton, NB, E3B 5A3, Canada}
\email{cingalls@unb.ca}

\thanks{The first author was partially supported by an NSERC Discovery Grant. The second author was supported by the University of Connecticut and partially by the NSF CAREER grant DMS-1254567.}

\begin{abstract} Let $A$ be a (left and right) Noetherian ring that is semiperfect. Let $e$ be an idempotent of $A$ and consider the ring $\Gamma:=(1-e)A(1-e)$ and the semi-simple right $A$-module $S_e : = eA/e\,{\rm rad}A$. In this paper, we investigate the relationship between the global dimensions of $A$ and $\Gamma$, by using the homological properties of $S_e$. More precisely, we consider the Yoneda ring $Y(e):=\Ext^*_A(S_e,S_e)$ of $e$. We prove that if $Y(e)$ is Artinian of finite global dimension, then $A$ has finite global dimension if and only if so does $\Gamma$. We also investigate the situation where both $A,\Gamma$ have finite global dimension. When $A$ is Koszul and finite dimensional, this implies that $Y(e)$ has finite global dimension. We end the paper with a reduction technique to compute the Cartan determiant of Artin algebras. We prove that if $Y(e)$ has finite global dimension, then the Cartan determinants of $A$ and $\Gamma$ coincide. This provides a new way to approach the long-standing Cartan determinant conjecture.
\end{abstract}

\maketitle

\section{Introduction}

Given any ring or algebra $A$, a fundamental question to ask is whether $A$ has finite global dimension or not. In algebraic geometry, the finiteness of the global dimension of an algebra is often associated with the smoothness of a geometric object attached to the algebra. Algebras of finite global dimension appear in the study of non-commutative resolutions. In homological algebra, if $A$ has finite global dimension, then the bounded derived category of the category of $A$-modules can be replaced by the homotopy category of bounded complexes of projective modules, the latter being easier to study. A result of Happel \cite{Happel} states that for a finite dimensional algebra $A$ over a field $k$, the global dimension of $A$ is finite if and only if the bounded derived category of finite dimensional modules has Serre duality. In commutative algebra, when $A$ is Noetherian, Auslander and Buchsbaum have proven the well known fact that $A$ is regular if and only if its global dimension is finite. This result has been proven independently by Serre in \cite{Serre}.

\medskip

In this paper, $A$ is an associative ring that we assume to be (left and right) Noetherian and semiperfect. We investigate the relationship between the global dimensions of the rings $A$ and $\Gamma:=(1-e)A(1-e)$ where $e$ is any idempotent of $A$. In general, the finiteness of the global dimension of $A$ is not equivalent to the finiteness of the global dimension of $\Gamma$. However, it seems that the semi-simple right $A$-module $S_e$ supported at $e$ controls this relationship. We consider the Yoneda ring $Y(e):=\Ext^*_A(S_e, S_e)$ of $e$ and our first theorem is the following.

\begin{Theor}
Assume that $Y(e)$ is (left and right) Artinian and has finite global dimension. Then $A$ has finite global dimension if and only if $\Gamma$ has finite global dimension.
\end{Theor}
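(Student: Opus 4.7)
The theorem breaks into two implications, only the converse of which uses the $Y(e)$ hypothesis. For the easy direction, $\mathrm{gldim}(A) < \infty \Rightarrow \mathrm{gldim}(\Gamma) < \infty$: the functor $F := -\cdot f : \mmod A \to \mmod \Gamma$ is exact and sends projective right $A$-modules to projective right $\Gamma$-modules (a standard idempotent-truncation fact, noting $Af = eAf \oplus \Gamma$ is a decomposition of right $\Gamma$-modules). For any $N \in \mmod \Gamma$, the module $M := N \otimes_\Gamma fA$ satisfies $F(M) \cong N$ since the unit of the adjunction $- \otimes_\Gamma fA \dashv F$ is an isomorphism; hence $\mathrm{pd}_\Gamma(N) \leq \mathrm{pd}_A(M) \leq \mathrm{gldim}(A)$.

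For the converse direction, I assume $\mathrm{gldim}(\Gamma) < \infty$ and $Y(e)$ Artinian of finite global dimension. Since $A$ is semiperfect, it suffices to bound $\mathrm{pd}_A(S)$ for every simple right $A$-module $S$. Let $P_\bullet \to S$ be the minimal projective $A$-resolution, decomposed as $P_n = (eA)^{a_n} \oplus (fA)^{b_n}$. By minimality, $\Ext^n_A(S, S_e) \cong S_e^{a_n}$ and $\Ext^n_A(S, S_f) \cong S_f^{b_n}$, so $\mathrm{pd}_A(S) < \infty$ is equivalent to $a_n = b_n = 0$ for $n \gg 0$. The plan is to bound these sequentially, using the two hypotheses in turn.

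First I bound $a_n$ using $Y(e)$. The graded space $\Ext^*_A(S, S_e)$ carries a natural $Y(e)$-module structure via Yoneda composition. Since $Y(e)$ is non-negatively graded with semisimple degree-zero part $\End(S_e)$, the Artinian hypothesis forces $Y(e)$ to be concentrated in finitely many degrees, and the finite global dimension of $Y(e)$ should control the growth of $\Ext^*_A(S, S_e)$ as a $Y(e)$-module, yielding $a_n = 0$ for $n \geq N$ for some $N$. Once this holds, the tail $P_n = (fA)^{b_n}$ for $n \geq N$ is a minimal projective $A$-resolution of $\Omega^N S$; the tail differentials have matrix entries in $\End_A(fA) = \Gamma$, and since $\mathrm{rad}(\Gamma) = f\,\mathrm{rad}(A)\,f = fAf \cap \mathrm{rad}(A)$, this tail restricts under $-\cdot f$ to a minimal projective $\Gamma$-resolution of $(\Omega^N S) \cdot f$. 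Finite $\mathrm{gldim}(\Gamma)$ then truncates this $\Gamma$-resolution, giving $b_n = 0$ for $n$ large and completing the argument.

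The principal obstacle I anticipate is in the first step above, namely showing that $\Ext^*_A(S, S_e)$ is bounded as a graded $Y(e)$-module. The Artinian hypothesis alone only gives boundedness of $Y(e)$ itself; the finite global dimension of $Y(e)$ must be leveraged to transfer this boundedness to Ext modules arising from arbitrary simples $S$, most likely via a finite-generation argument over $Y(e)$ combined with the fact that a finitely generated graded module over a bounded graded ring is itself bounded. Proving such finite generation rigorously, perhaps via a Koszul-type transfer or a spectral sequence relating the two Ext algebras, is the central technical content of the proof.
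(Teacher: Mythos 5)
Your first paragraph contains a fatal error, and it occurs precisely at the point where the hypothesis on $Y(e)$ is actually needed. The functor $-\cdot(1-e)\cong\Hom_A((1-e)A,-)$ is exact, but it does \emph{not} send projective right $A$-modules to projective right $\Gamma$-modules: it sends $eA$ to $eA(1-e)$, and the decomposition $A(1-e)=eA(1-e)\oplus\Gamma$ that you invoke says nothing about projectivity of the summand $eA(1-e)$ over $\Gamma=(1-e)A(1-e)$. In fact the implication ``${\rm gl.dim}\,A<\infty$ implies ${\rm gl.dim}\,\Gamma<\infty$'' is simply false without a hypothesis on $Y(e)$: take $A$ to be the Auslander algebra of $k[x]/(x^2)$ (global dimension $2$) and $1-e$ the idempotent corresponding to the summand $k[x]/(x^2)$ of the additive generator, so that $\Gamma\cong k[x]/(x^2)$ has infinite global dimension. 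This direction is where the real work lies (Proposition \ref{Prop0}, resting on all of Section 3): starting from a simple $T\notin{\rm add}(S_e)$ one iterates minimal left $\mathcal{S}_e$-approximations in $D(A)$; the functor $G=\bigoplus_i\Hom_{D(A)}(-,S_e[i])$ turns the successive cones $C_i$ into the syzygies of $G(T)$ over $Y(e)$, so finite global dimension of $Y(e)$ forces some $C_r$ to be a bounded complex of projectives in ${\rm add}((1-e)A)$, which after applying $F$ becomes a finite projective $\Gamma$-resolution of $F(T)$.

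For the converse direction your second half (once the multiplicities $a_n$ vanish, the tail of the minimal resolution restricts to a minimal $\Gamma$-resolution) agrees with Proposition \ref{Prop1}, but the step you yourself flag as the principal obstacle is a genuine gap, and the route you propose --- finite generation of $\Ext^*_A(S,S_e)$ over $Y(e)$ --- is an Fg-type condition that there is no reason to expect in this generality. The paper sidesteps it entirely: for $S\in{\rm add}(S_e)$ the graded module $\Ext^*_A(S,S_e)$ is a direct summand of $Y(e)$ itself, so Artinianness alone bounds it, giving finite projective (hence flat) dimension of $S_e$; running the same argument over $A^{\rm op}$ (using ${\rm gl.dim}\,\Gamma^{\rm op}={\rm gl.dim}\,\Gamma$) gives finite flat dimension of the left module $Ae/{\rm rad}A\,e$; and then for an arbitrary simple $T$ the vanishing of ${\rm Tor}^i_A(T,Ae/{\rm rad}A\,e)$ for $i\gg0$ is exactly the statement that $a_i=0$ eventually. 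Note also that only Artinianness of $Y(e)$, not its finite global dimension, enters this direction.
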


If both $A$ and $\Gamma$ have finite global dimensions, then of course, $Y(e)$ is Artinian. However, $Y(e)$ need not have finite global dimension. We suspect, however, that by imposing an acyclicity condition on $e$, this should be true; see Conjecture \ref{conj}. When $A$ is finite dimensional positively graded such that $S_e$ has a linear projective resolution, we get the following.

\begin{Theor}
Assume that $A$ is finite dimensional positively graded such that $S_e$ has a linear projective resolution. Then any two of the following conditions imply the third.
\begin{enumerate}[$(1)$]
    \item The global dimension of $A$ is finite.
\item The global dimension of $\Gamma$ is finite.
\item The algebra $Y(e)$ is finite dimensional and has finite global dimension. \end{enumerate}
\end{Theor}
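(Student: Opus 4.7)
The plan is to treat the three pairwise implications separately. Under hypothesis (3), the previous theorem immediately yields that (1) and (2) are equivalent, which disposes of both $(3)+(1)\Rightarrow(2)$ and $(3)+(2)\Rightarrow(1)$. The substantive content is therefore the implication $(1)+(2)\Rightarrow(3)$, which in turn splits into two assertions: finite dimensionality of $Y(e)$, and finite global dimension of $Y(e)$.

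Finite dimensionality of $Y(e)$ should already follow from hypothesis (1) alone. Indeed, since $A$ has finite global dimension and $S_e$ is a finite-dimensional $A$-module, $S_e$ has finite projective dimension, say $n$. The hypothesis that $S_e$ admits a linear projective resolution then forces the minimal graded resolution to have length exactly $n$; consequently $\Ext^i_A(S_e,S_e)$ vanishes for $i>n$. Each graded piece is finite-dimensional because $A$ is, so $Y(e)=\bigoplus_{i=0}^n \Ext^i_A(S_e,S_e)$ is a finite-dimensional algebra.

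The heart of the proof is then to show that $Y(e)$ has finite global dimension, where both (1) and (2) must intervene. I would pursue this via the Koszul-type structure furnished by the linear resolution: let $P^\bullet\to S_e$ denote the (now finite) linear projective resolution, and consider the Koszul-dual construction attaching, to each simple summand of $S_e$ (which is the same as each simple $Y(e)$-module), an explicit complex built from $P^\bullet$ and assembled by Yoneda multiplication. The goal is to show that this complex is a finite projective resolution over $Y(e)$ of the corresponding simple. The linearity hypothesis ensures compatibility of the internal grading with the homological grading on $Y(e)$, so that finite projective dimensions of modules over $A$ and over $\Gamma$ can be packaged on the $Y(e)$-side without spurious extensions.

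The main obstacle I anticipate is the precise bookkeeping of what happens, in this construction, to the simples lying outside the support of $e$: it is here that the finite global dimension of $\Gamma$ must enter, roughly because a generator of $\Ext^*_{Y(e)}$ in arbitrarily high degree would, upon unfolding via the linear resolution and Yoneda composition, force either an $A$-module or a $\Gamma$-module of infinite projective dimension, contradicting (1) or (2). Executing this contradiction cleanly, or equivalently exhibiting the advertised finite $Y(e)$-projective resolutions as explicit ``glueings'' of $A$-resolutions with $\Gamma$-resolutions, is where the real technical work lies.
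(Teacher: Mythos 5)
Your reduction of the theorem to three pairwise implications matches the paper's own structure: the two implications starting from (3) follow from the earlier results (Proposition \ref{Prop0} and Proposition \ref{Prop1}, i.e.\ the first theorem), and your argument that (1) alone gives finite dimensionality of $Y(e)$ is correct (indeed linearity is not even needed there: finite projective dimension of $S_e$ already kills $\Ext^i_A(S_e,S_e)$ for large $i$, and each graded piece is finite dimensional since $A$ is). The problem is that the remaining implication, $(1)+(2)\Rightarrow {\rm gl.dim}\,Y(e)<\infty$, is the entire substance of the theorem, and your proposal does not prove it. What you offer is a statement of intent --- ``consider the Koszul-dual construction,'' ``the goal is to show that this complex is a finite projective resolution,'' ``executing this contradiction cleanly \ldots is where the real technical work lies'' --- with no construction actually carried out and no argument for why it terminates. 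That is a genuine gap, not a compressed proof.

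For comparison, the paper's Theorem \ref{Theo1} builds, for each simple $T$ in ${\rm add}(S_e)$, a minimal projective resolution of $G(T)$ in $\mmod Y(e)$ by iterating minimal left $\mathcal{S}_e$-approximations $f_i\colon M_i\to S_i$ in $D(A)$ (Lemma \ref{LemmaMinimal} identifies $G(f_i)$ with a projective cover), and then must show $G(M_i)=0$ for $i$ large. This termination step is where all the work is: it requires representing the $M_i$ as total complexes of linear complexes (Lemmas \ref{HomDegreeZero}, \ref{Lemma2}, \ref{LemmaTotal}), extracting the short exact sequences on cohomology of Lemma \ref{Lemma3}, running a dimension-count to push the nonvanishing of $\Hom_{D(A)}(M_i,S_e[j])$ to ever more negative $j$, and finally invoking ${\rm gl.dim}\,\Gamma<\infty$ together with the splitting Lemma \ref{LemmaSplit} to derive a contradiction. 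Your intuition that $\Gamma$ enters by bounding a projective dimension on the $\Gamma$-side and that linearity is needed to align the internal and homological gradings is pointing in the right direction, but none of the machinery that makes this precise is present in your write-up, so the implication $(1)+(2)\Rightarrow(3)$ remains unproved.
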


The above theorem applies to finite dimensional Koszul algebras, since every semi-simple module over a Koszul algebra has a linear projective resolution. We end this paper with a new reduction technique to compute the Cartan determinant of an algebra. In particular, this gives a new tool to attack the long-standing Cartan determinant conjecture.

\begin{Propo}
Assume that $A$ is an artin algebra with both $A$ and $Y(e)$ of finite global dimension. Then $A$ and $\Gamma$ have the same Cartan determinant and $\Gamma$ has finite global dimension. In particular, if the Cartan determinant conjecture holds for the smaller algebra $\Gamma$, then it holds for $A$.
\end{Propo}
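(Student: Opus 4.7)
The plan is to prove the three assertions in order: $\Gamma$ has finite global dimension (via the first theorem of the introduction), $\det C_A=\det C_\Gamma$ (the substantive content), and the Cartan-conjecture transfer (immediate from the second). For the first, I would apply the first theorem, whose hypothesis requires $Y(e)$ to be Artinian. Since $A$ is an artin algebra of finite global dimension, $S_e$ has finite projective dimension, so $Y(e)=\bigoplus_k\Ext^k_A(S_e,S_e)$ is a direct sum of finitely many finite-length modules over the centre of $A$. Hence $Y(e)$ is of finite length, in particular Artinian, and the first theorem gives $\Gamma$ finite global dimension.

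Next, I would fix primitive orthogonal idempotents with $e=e_1+\cdots+e_r$ and write the Cartan matrix in block form
\[
C_A=\begin{pmatrix}X & Y\\ Z & W\end{pmatrix}
\]
relative to the splitting $\{1,\dots,r\}\sqcup\{r+1,\dots,n\}$. The exact functor $M\mapsto M(1-e)$ from $\mmod A$ to $\mmod\Gamma$ sends $P_i=e_iA$ for $i>r$ to the indecomposable $\Gamma$-projective $e_i\Gamma$ and preserves composition multiplicities of the simples $T_j$ with $j>r$, so $W=C_\Gamma$. Since $C_\Gamma$ is invertible, the block determinant formula yields $\det C_A=\det C_\Gamma\cdot\det(X-YW^{-1}Z)$, and the claim reduces to showing that the Schur complement has determinant $1$.

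The key step, and the main obstacle, is to interpret and compute this Schur complement. By the block-inversion formula, $(X-YW^{-1}Z)^{-1}$ is the top-left $r\times r$ block of $C_A^{-1}$, whose $(i,j)$ entry is (up to a factor of the length of $\End(S_j)$) the Euler characteristic $\chi_A(S_i,S_j)=\sum_k(-1)^k\ell(\Ext^k_A(S_i,S_j))$. Since $\Ext^k_A(S_i,S_j)$ is the $(e_j,e_i)$-component of the graded piece $Y(e)_k$, the top-left block of $C_A^{-1}$ equals the graded Euler matrix $E^{gr}_{Y(e)}=H_{Y(e)}(-1)$, where $H_{Y(e)}(t)$ is the graded Hilbert matrix of $Y(e)$. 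To finish, I would argue via Hilbert series: since $Y(e)$ is positively graded with semisimple $Y(e)_0$ and finite global dimension, the minimal graded projective resolution of $Y(e)_0$ over $Y(e)$ is finite, yielding an identity $H_{Y(e)}(t)\cdot N(t)=I$ for some polynomial matrix $N(t)$. Hence $\det H_{Y(e)}(t)$ is a unit in the polynomial ring, and since $H_{Y(e)}(0)=I$ this determinant equals $1$. Specializing at $t=-1$ gives $\det E^{gr}_{Y(e)}=1$, so $\det(X-YW^{-1}Z)=1$ and $\det C_A=\det C_\Gamma$; the Cartan-conjecture transfer is then immediate.
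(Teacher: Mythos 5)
Your proposal is correct and follows essentially the same route as the paper: identify $C_\Gamma$ as a principal block of $C_A$, identify the complementary principal block of $C_A^{-1}$ with the graded Euler matrix of $Y(e)$ evaluated at $t=-1$, and use the finite global dimension of $Y(e)$ to show that block has determinant $1$. The only cosmetic differences are that you combine the blocks via the Schur-complement/block-inversion formula where the paper cites the generalized Cramer rule for complementary minors, and you re-derive Wilson's graded Cartan determinant theorem from the finite minimal graded resolution of $Y(e)(0)$ where the paper simply cites Wilson.
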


\section{Preliminaries and notations}

Let $A$ be an associative (left and right) Noetherian ring that is semiperfect. In particular, $A$ may be an artin algebra. We refer the reader to \cite{AndFul} for more details on semiperfect rings. In particular, there is a complete set of pairwise orthogonal primitive idempotents $e_1, \ldots, e_n$ of $A$. Since we are mainly concerned with homological algebra, there is no loss of generality in assuming that $A$ is basic, that is, the projective right $A$-modules $e_iA$ for $1 \le i \le n$ are pairwise non-isomorphic. We denote by $\mmod A$ the category of finitely generated right $A$-modules and by ${\rm rad} A$ the Jacobson radical of $A$. Since $A$ is Noetherian semiperfect, any $M \in \mmod A$ admits a minimal projective resolution whose terms are in $\mmod A$. Observe also that the category $\mmod A$ is Krull-Schmidt. In particular, an indecomposable projective $A$-module in $\mmod A$ has to be isomorphic to one of the $e_iA$ and an indecomposable simple $A$-module in $\mmod A$ has to be isomorphic to one of the $e_jA/e_j{\rm rad}A$.

\medskip

In this paper, $e$ denotes an idempotent of $A$. The \emph{rank} of $e$, denoted ${\rm rk}(e)$, is an integer between $0$ and $n$ such that ${\rm rk}(1)=n$ and if $e = e_1 + e_2$ with $e_1, e_2$ orthogonal idempotents, then ${\rm rk}(e) = {\rm rk}(e_1) + {\rm rk}(e_2)$. In particular, $e$ is primitive if and only if ${\rm rk}(e)=1$. We set $\Gamma = (1-e)A(1-e) = {\rm End}_A((1-e)A)$, which we call an \emph{idempotent subring} (or \emph{idempotent subalgebra} if a ground commutative ring is given) of $A$. Observe that $\Gamma$ is basic semiperfect Noetherian (see \cite[Cor. 27.7]{AndFul} and \cite[Prop. 2.3]{Sando}) and has $n-{\rm rk}(e)$ non-isomorphic simple right $\Gamma$-modules. In this sense, when $e \ne 0$, the ring $\Gamma$ is smaller and could be easier to understand from the homological point of view.

\medskip

To relate the rings $A$ and $\Gamma$, we need to introduce a third one. We set $$Y(e) := \Ext_A^*(S_e,S_e) = \textstyle{\bigoplus}_{i \ge 0}\Ext^i_A(S_e, S_e),$$
 where $S_e$ is the semi-simple right $A$-module $S_e: = eA/e{\rm rad}\,A$. By convention, $\Hom(-,-)= \Ext^0_A(-,-)$. The ring $Y(e)$ is the\emph{Yoneda ring} or \emph{Ext-ring} of $S_e$. The multiplication in this ring is induced by the Yoneda product of exact sequences. If $e=1$, we simply call $Y(1)$ the \emph{Yoneda ring} or \emph{Ext-ring} of $A$. Observe that $Y(e)$ is naturally graded by the Ext-degree making it a positively graded ring
such that each graded piece $\Ext^i_A(S_e, S_e)$ is of finite length as an $\Hom_A(S_e, S_e)$-module. It is a (left and right) Artinian ring whenever $A$ has finite global dimension. Although we will consider right $A$ or $\Gamma$ modules, it will be useful for us to study left $Y(e)$-modules. Note that $Y(e)$ has ${\rm rk}(e)$ non-isomorphic simple graded left (or right) $Y(e)$-modules.
We will denote by $\mmod Y(e)$ the category of locally finite graded left $Y(e)$-modules. Recall that $M \in \mmod Y(e)$ if
$M = \textstyle{\bigoplus}_{j \in \Z}M(j)$ where for $i \ge 0, j \in \Z$, we have $\Ext^i_A(S_e, S_e)\cdot M(j) \subseteq M(i+j)$ and $M(j)$ is of finite length as an $\Hom_A(S_e,S_e)$-module. We want to stress the fact that $Y(e)$ is a ring, and is not the shift by $e$ of $Y$. The symbol $Y$ alone will never be used in this paper.

\medskip

Most of our homological computations will be made in derived categories. Let us introduce some notations concerning these derived categories.
First, we denote by $D(A)$ the bounded derived category of $\mmod A$ and by $D(\Gamma)$ the bounded derived category of $\mmod \Gamma$. We let $F$ denote the exact functor of triangulated categories $F= R\Hom_A((1-e)A,-): D(A) \to D^b(\mmod \Gamma)$. If $M \in D(A)$ is given by a complex of projective $A$-modules, since $(1-e)A$ is projective, $F(M)$ is obtained by applying $\Hom_A((1-e)A,-)$ to each term and each differential of the complex $M$. Moreover, when doing so, $F(M)$ is given by a complex of projective $\Gamma$-modules in $D(\Gamma)$. Therefore, $F$ induces a functor $F: {\rm per}A \to {\rm per}\Gamma$ between the corresponding perfect derived categories. Let $D(R\Hom(S_e, S_e))$ denote the derived category of differential graded (written as dg for short)-modules over the dg-ring $R\Hom(S_e, S_e)$. We denote by $G: D(A) \to \mmod Y(e)$ the functor defined as the composite of $R\Hom(-, S_e): D(A) \to D(R\Hom(S_e, S_e))$ with $H: D(R\Hom(S_e, S_e)) \to \mmod Y(e)$, the latter being the functor which associates to a dg-module its total cohomology. Alternatively, we have $$G = \textstyle{\bigoplus}_{i \in \Z}\Hom_{D(A)}(-, S_e[i]): D(A) \to \mmod Y(e).$$ Let $M \in D(A)$.
The degree $i$ part $G(M)(i)$ of $G(M)$ is $$G(M)(i) = \Hom_{D(A)}(M,S_e[i])$$
which is of finite length as an $\Hom_A(S_e, S_e)$-module.
Let $f: M \to N$ be a morphism in $D(A)$. The degree $i$ part of the morphism $G(f): G(N) \to G(M)$ is $G(f)(i) = \Hom_{D( A)}(f, S_e[i])$. It is straightforward to check that $G(f) = (G(f)(i))_{i \in \Z}$ defines a morphism of graded $Y(e)$-modules.
Let add$(S_e)$ denote the full additive subcategory of $\mmod A$ generated by the direct summands of $S_e$ and which is closed under isomorphisms. Observe that for $S$ indecomposable in add$(S_e)$, $G(S)$ is a (graded) finitely generated projective $Y(e)$-module generated in degree $0$.

\section{Projective covers and approximations}

In this section, we will relate projective covers in $\mmod Y(e)$ with some minimal left-approximations in $D(A)$. We start with the following, which is well known by the specialists.

\begin{Lemma} \label{Lemma0} Let $P,Q$ be projective in $\mmod A$ and $f: P \to Q$ a morphism which is not radical. Then there are decompositions $Q = Q_1 \oplus Q_2$ and $P=P_1 \oplus P_2$ such that, under these decompositions, $f = f_1 \oplus f_2$ where $f_1$ is an isomorphism and $f_2$ is radical.
\end{Lemma}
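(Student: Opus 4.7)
The plan is to exploit the Krull--Schmidt property of $\mmod A$, which holds because $A$ is Noetherian semiperfect, together with a standard Gaussian-elimination argument in a Krull--Schmidt category. First I would decompose $P = \bigoplus_{i=1}^m P_i$ and $Q = \bigoplus_{j=1}^k Q_j$ into indecomposable projective summands. Each such summand has a local endomorphism ring, so any morphism between two indecomposable projectives is either an isomorphism or belongs to the radical of the Hom space. Writing $f$ as a matrix $(f_{ji})$ with $f_{ji} : P_i \to Q_j$ relative to these decompositions, one then has the standard characterization: $f$ is radical if and only if none of the entries $f_{ji}$ is an isomorphism.

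Since $f$ is assumed non-radical, some component $f_{j_0 i_0} : P_{i_0} \to Q_{j_0}$ is an isomorphism; call it $a$. Re-ordering the summands so that $P = P_{i_0} \oplus P''$ and $Q = Q_{j_0} \oplus Q''$, the morphism $f$ takes the block form $\begin{pmatrix} a & b \\ c & d \end{pmatrix}$. The automorphisms $\begin{pmatrix} 1 & 0 \\ -ca^{-1} & 1 \end{pmatrix}$ of $Q$ and $\begin{pmatrix} 1 & -a^{-1}b \\ 0 & 1 \end{pmatrix}$ of $P$ conjugate $f$ to the block-diagonal morphism $\begin{pmatrix} a & 0 \\ 0 & d - ca^{-1}b \end{pmatrix}$. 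Transporting the direct-sum decompositions of $P$ and $Q$ along these automorphisms exhibits $f$ as a direct sum $a \oplus g$, where $g := d - ca^{-1}b : P'' \to Q''$, and where $a$ is an isomorphism between the rebracketed first summands.

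Then I would iterate this splitting procedure on $g$: either $g$ is already radical, in which case we are done, or some matrix entry of $g$ is an isomorphism and we repeat the extraction. The process strictly decreases the number of indecomposable summands involved on each side, so it terminates in at most $\min(m,k)$ steps. Collecting all the extracted isomorphism blocks into a single morphism $f_1$ and the final residual into $f_2$ yields the desired decomposition $f = f_1 \oplus f_2$ with $f_1$ an isomorphism and $f_2$ radical.

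The main obstacle is not conceptual but bookkeeping: one must check that the successive changes of basis genuinely rebracket $P$ and $Q$ into compatible direct-sum decompositions. The slightly delicate preliminary point is the equivalence ``$f$ radical $\iff$ no matrix entry $f_{ji}$ is an isomorphism'', which rests on the locality of $\End_A(P_i)$ and $\End_A(Q_j)$ and on the fact that the radical of $\Hom_A(P,Q)$ decomposes compatibly along the Krull--Schmidt decompositions; this is the folklore fact the statement alludes to, and is the only step where I would be careful to cite a precise reference such as \cite{AndFul}.
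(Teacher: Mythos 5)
Your proof is correct, but it runs on different machinery than the paper's. You first reduce to indecomposable summands, invoke the characterization ``$f$ is radical iff no matrix entry $f_{ji}$ between indecomposable summands is an isomorphism'' (which rests on the locality of the endomorphism rings and on $\mathrm{rad}\,\Hom_A(P,Q)=\Hom_A(P,\mathrm{rad}\,Q)$ for projectives over a semiperfect ring), and then perform Gaussian elimination with elementary automorphisms of $P$ and $Q$ to split off one isomorphism block at a time. The paper instead never writes $f$ as a matrix over indecomposables: it picks a projective direct summand $Q'$ of $Q$ for which the composite $\pi f:P\to Q'$ is surjective (possible precisely because $f$ is not radical, by Nakayama), uses projectivity of $Q'$ to split $\pi f$ and obtain $P=P'\oplus\ker(\pi f)$ with $f(P')$ a direct summand of $Q$, and then inducts on the number of indecomposable summands of $P$. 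Both are instances of the same peeling-off induction and both terminate for the same Krull--Schmidt reason; your route makes the change of basis completely explicit at the cost of the bookkeeping you flag (transporting the decompositions along the elementary automorphisms, and the folklore radical-entry criterion, which does deserve a citation), while the paper's route avoids matrices entirely by trading them for the splitting of surjections onto projectives, so it extracts the summands of $P$ and $Q$ directly as $P'=$ a complement of $\ker(\pi f)$ and $Q_1=f(P')$ without ever inverting a block.
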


\begin{proof} We proceed by induction on $s$, the number of indecomposable direct summands of $P$, which is well defined by the Krull-Schmidt property. There is a direct summand $Q'$ of $Q$ such that $\pi f$ is surjective, where $\pi: Q \to Q'$ is the canonical projection. Now, since $\pi f$ splits, there is a direct summand $P'$ of $P$ with $P = P' \oplus {\rm ker}(\pi f)$ and where the restriction of $\pi f$ to $P'$ is an isomorphism. Let $Q_1=f(P')$, which is also a direct summand of $Q$ and let $\pi_1: Q \to Q_1$ be the canonical projection. We have $Q = Q_1 \oplus Q_2$ and $P = P_1 \oplus P_2$ where $P_2 = {\rm ker} (\pi_1 f)$ and $P_1 = P'$. Observe that under these decompositions, $f = f_{P_1} \oplus f_{P_2}$ where $f_{P_i}$ denotes the restriction of $f$ to $P_i$. If $f_{P_2}: P_2 \to Q_2$ is radical, then we are done. In particular, this settles the case where $s=1$. Otherwise, since $\mmod A$ is Krull-Schmidt, we proceed by induction on $f_{P_2}$.
\end{proof}

Denote by $\mathcal{S}_e$ the full additive subcategory of $D(A)$ generated by the shifts of the objects in add$(S_e)$.  Recall that $\mathcal{S}_e$ is said to be \emph{covariantly finite} in $D(A)$ if for any $M \in D(A)$, there is a morphism $f_S: M \to S$ with $S \in \mathcal{S}_e$ such that $\Hom_{D(A)}(f_S,S')$ is surjective whenever $S' \in \mathcal{S}_e$. We start with the following easy lemma.

\begin{Lemma} If $A$ has finite global dimension, then $\mathcal{S}_e$ is covariantly finite in $D(A)$.
\end{Lemma}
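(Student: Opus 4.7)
The plan is to construct, for any $M \in D(A)$, a left $\mathcal{S}_e$-approximation $f\colon M \to S$ by transporting the problem via the functor $G$ into $\mmod Y(e)$, taking a graded projective cover there, and lifting it back to $D(A)$.

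First we use finite global dimension of $A$ to reduce to the case where $M$ is represented by a bounded complex of finitely generated projective $A$-modules. This reduction guarantees that the graded $Y(e)$-module $V := G(M)$ has only finitely many nonzero graded pieces, each of finite length over $Y(e)_0 = \End_A(S_e)$; so $V$ is bounded and finitely generated in $\mmod Y(e)$.

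Since $Y(e)$ is positively graded with semisimple degree-zero part, graded Nakayama applied to the finite-length quotient $V/Y(e)_{>0}V$ yields a graded projective cover $p\colon P \to V$ with $P$ finitely generated projective. By the observation in the Preliminaries that $G$ sends indecomposable objects of $\mathcal{S}_e$ to indecomposable graded projective $Y(e)$-modules (generated in the corresponding degree), we can realize $P$ as $G(S)$ for some $S \in \mathcal{S}_e$. The natural bijection
\[
\Hom_{D(A)}(N, S) \xrightarrow{\;G\;} \Hom_{\mmod Y(e)}(G(S), G(N)),
\]
valid for any $N \in D(A)$ and $S \in \mathcal{S}_e$ (by additivity it reduces to $S = S_e[k]$, where both sides tautologically identify with $G(N)(k)$), then lifts $p$ to $f\colon M \to S$ with $G(f) = p$. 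The approximation condition is now formal: for any $g\colon M \to S'$ with $S' \in \mathcal{S}_e$, projectivity of $G(S')$ gives $q\colon G(S') \to G(S)$ with $G(g) = G(f)\circ q$, and the bijection for $\Hom_{D(A)}(S, S')$ writes $q = G(h)$ for some $h\colon S \to S'$; injectivity of the bijection for $\Hom_{D(A)}(M, S')$ then forces $g = hf$.

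The main obstacle lies in the very first step: controlling $V = G(M)$ so that it is bounded and admits a graded projective cover whose preimage under $G$ lies in $\mathcal{S}_e$. Without finite global dimension, $\Ext^i_A(M, S_e)$ could be nonzero in infinitely many degrees and no object of $\mathcal{S}_e$ would suffice as a target. The remaining ingredients---producing the cover by graded Nakayama, identifying it with $G(S)$, the Yoneda-type bijection, and the approximation check---are essentially formal once boundedness is in hand.
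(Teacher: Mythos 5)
Your proof is correct, but it takes a genuinely different route from the paper's. The paper argues directly at the level of complexes: represent $M$ by a bounded complex $P^\bullet$ of finitely generated projectives with radical differentials (Lemma~\ref{Lemma0} plus finite global dimension), note that any morphism $M \to S_e[i]$ in $D(A)$ is then just a module map $P^{-i} \to S_e$, which factors through the top of $P^{-i}$ because $S_e$ is semisimple; hence the sum of the canonical surjections onto the ${\rm add}(S_e)$-parts of the tops, $M \to \bigoplus_{j} S_j[j]$, is already a left $\mathcal{S}_e$-approximation. You instead transport the problem to $\mmod Y(e)$, take a graded projective cover there, and pull it back along the bijection $\Hom_{D(A)}(N,S) \cong \Hom_{Y(e)}(G(S),G(N))$. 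That bijection is precisely the paper's Lemma~\ref{Lemma1}, and your use of graded projective covers rests on the graded semiperfectness of $Y(e)$ (the Dascalescu input); both of these the paper only brings in \emph{after} this lemma, so your argument front-loads machinery the paper deliberately postpones, and in effect runs Lemma~\ref{LemmaMinimal} in reverse. What this buys you is that the approximation is automatically minimal, whereas the paper's construction is explicitly not minimal and must be minimized afterwards via the Krull--Schmidt property of $\mathcal{S}_e$. Both arguments use finite global dimension in the same single place: to guarantee boundedness (of $P^\bullet$, respectively of $G(M)$), without which no object of $\mathcal{S}_e$ could serve as a target.
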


\begin{proof} Let $M \in D(A)$. Consider a complex $P^\bullet \in D(A)$
$$P^\bullet: \qquad \cdots \to P^{-2} \to P^{-1} \to P^0 \to P^1 \to \cdots$$
of finitely generated projective $A$-modules such that $M \cong P^\bullet$. By Lemma \ref{Lemma0}, we may assume that the complex has radical differentials.  Since $A$ has finite global dimension, $P^\bullet$ is a bounded complex. We let $S_i$ denote the top of $P_{-i}$. It is clear that $M \to \textstyle{\bigoplus}_{j \in \Z}S_j[j]$ is a left $\mathcal{S}_e$-approximation of $M$. It need not be minimal.
\end{proof}

Note that since $\mathcal{S}_e$ is clearly a Krull-Schmidt category, any left $\mathcal{S}_e$-approximation $f: M \to S$ of an object $M \in D(A)$ can be made minimal: there is a direct summand $S'$ of $S$ such that the co-restriction $f'$ of $f$ to $S'$ is again a left $\mathcal{S}_e$-approximation of $M$ and any morphism $h: S' \to S'$ with $hf' = f'$ has to be an isomorphism. For a reference, the reader may refer to Corollary 1.4 in \cite{KrSao}.

\medskip

\begin{Lemma} \label{Lemma1} Let $M \in D(A)$, $i \in \Z$ and $S \in {\rm add}(S_e[i])$. Then a graded morphism $h: G(S) \to G(M)$ is uniquely determined by $G(S)(i) \to G(M)(i)$ and in particular, there is a unique $f: M \to S$ such that $h = G(f)$.
\end{Lemma}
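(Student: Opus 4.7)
The plan has two parts, both resting on the observation (recorded at the end of Section 2) that for each indecomposable summand $T$ of $S_e$, $G(T)$ is a finitely generated projective graded $Y(e)$-module generated in degree $0$, corresponding to the idempotent of $Y(e)(0)=\End_A(S_e)$ that cuts $T$ out of $S_e$. Applying the shift $G(T[i]) \cong G(T)[i]$ and invoking additivity, $G(S)$ is a graded projective $Y(e)$-module generated in degree $i$ for every $S \in \mathrm{add}(S_e[i])$. The first assertion is then immediate: a graded $Y(e)$-module morphism $h\colon G(S)\to G(M)$ is completely determined by its values on any set of generators, in particular on $G(S)(i)$.

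For the existence and uniqueness of $f$, I would argue that $G$ induces a bijection
$$ \Hom_{D(A)}(M,S) \xrightarrow{\;\sim\;} \Hom_{Y(e)}(G(S),G(M)), \qquad f\mapsto G(f). $$
Both sides are additive in $S$, so since $\mathrm{add}(S_e[i])$ is generated under direct summands by $S_e[i]$, it suffices to verify the bijection when $S=S_e[i]$. In that case $G(S_e[i])$ is the free rank one graded $Y(e)$-module with canonical generator $\mathrm{id}_{S_e[i]}\in G(S_e[i])(i)$, so evaluation at this generator identifies $\Hom_{Y(e)}(G(S_e[i]),G(M))$ with $G(M)(i)=\Hom_{D(A)}(M,S_e[i])$. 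Under these identifications $G(f)$ corresponds to $\mathrm{id}_{S_e[i]}\circ f = f$, so the map $f\mapsto G(f)$ is the identity and hence bijective. Combined with the first assertion, the given $h$ is determined by $h(i)$, and $h(i)$ corresponds under the bijection to a unique $f\colon M\to S$ with $G(f)=h$.

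The only mildly delicate step is the reduction to $S_e[i]$: a summand $S$ of $S_e[i]^r$ is cut out by an idempotent in $M_r(Y(e)(0))$, and one must check that $G$ sends this idempotent compatibly to the corresponding idempotent on $G(S_e[i])^r$. This is purely formal bookkeeping, so I do not foresee any genuine obstacle; the argument is essentially a graded Yoneda-type computation anchored on the fact that $G$ sends $S_e[i]$ to the free $Y(e)$-module with generator placed in degree $i$.
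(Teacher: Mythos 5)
Your argument is correct and follows essentially the same route as the paper: both rest on the observation that $G(S)$ is graded projective generated in degree $i$, and both recover $f$ by evaluating the degree-$i$ component of $h$ at the canonical generator coming from the (co)restriction of $S$ to $S_e[i]$ (the paper writes this as $f=\pi\circ h_0(\iota_S)$ after reducing to $S$ an indecomposable summand of $S_e$, whereas you reduce to the free module $G(S_e[i])$ and split off summands by idempotents afterwards). The difference is only bookkeeping, not substance.
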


\begin{proof}
We may assume that $S$ is indecomposable and that $i=0$. Thus, we may assume that $S$ is a direct summand of $S_{e}$. Then, $G(S)$ is indecomposable projective generated in degree $0$. The first part is clear since $h$ is graded. For the second part, since $h$ is uniquely determined by $h_0: \Hom(S,S_e) \to \Hom(M,S_e)$, we may take $f = \pi \circ h_0(\iota_{S})$ where $\iota_S: S \to S_e$ is the split inclusion and $\pi: S_e \to S$ is the projection.
\end{proof}

Since $Y(e)$ is $\mathbb{Z}$-graded (with the negative homogeneous pieces being all zero) and $Y(e)(0)$ is Artinian, it follows from \cite{Dasc} that $Y(e)$ is graded semiperfect and, in particular, the finitely generated $Y(e)$-modules admit graded projective covers.

\begin{Lemma} \label{LemmaMinimal}
Let $M \in D(A)$ and $f: M \to S$ be a minimal left $\mathcal{S}_e$-approximation of $M$ in $D(A)$. Then $G(f): G(S) \to G(M)$ is a graded projective cover in $\mmod Y(e)$.
\end{Lemma}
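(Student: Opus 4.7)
The plan is to verify the three defining properties of a graded projective cover for $G(f)\colon G(S) \to G(M)$ in $\mmod Y(e)$: namely that $G(S)$ is graded projective, that $G(f)$ is surjective, and that any graded endomorphism $h$ of $G(S)$ satisfying $G(f)\circ h = G(f)$ is an isomorphism.

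Projectivity is obtained by decomposing $S = \bigoplus_{i \in \Z} S^{(i)}$ with $S^{(i)} \in {\rm add}(S_e[i])$ and almost all $S^{(i)}$ zero. Each $G(S^{(i)})$ is a graded shift (by $i$) of a finitely generated projective left $Y(e)$-module generated in degree $0$, by the observation at the end of Section~2, so $G(S) = \bigoplus_i G(S^{(i)})$ is graded projective. Surjectivity of $G(f)$ is immediate from the approximation property: the degree-$i$ component $G(f)(i) = \Hom_{D(A)}(f, S_e[i])$ is surjective because $S_e[i] \in \mathcal{S}_e$ and $f$ is a left $\mathcal{S}_e$-approximation.

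For the minimality condition, the key is to upgrade Lemma \ref{Lemma1} to an additive statement: summing over the shift-decomposition of $S$, the map
\[
G\colon \Hom_{D(A)}(X,S) \longrightarrow \Hom_{\mmod Y(e)}(G(S), G(X)), \qquad g \mapsto G(g),
\]
is a bijection for every $X \in D(A)$. Given $h\colon G(S) \to G(S)$ with $G(f)\circ h = G(f)$, applying this bijection at $X = S$ produces a unique $\tilde h\colon S \to S$ in $D(A)$ with $G(\tilde h) = h$. Applying it at $X = M$ to the identity $G(\tilde h \circ f) = G(f)\circ G(\tilde h) = G(f)\circ h = G(f)$ yields $\tilde h \circ f = f$ in $\Hom_{D(A)}(M,S)$. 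The minimality of the approximation $f$ then forces $\tilde h$ to be an isomorphism in $D(A)$, whence $h = G(\tilde h)$ is an isomorphism in $\mmod Y(e)$.

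The only technical point is the additive extension of Lemma \ref{Lemma1} to $S \in \mathcal{S}_e$ used above; however, since $G$ is additive and the graded Hom-space splits as the direct sum of its restrictions to the summands $G(S^{(i)})$ (each a single-shift situation covered by Lemma \ref{Lemma1}), this is essentially automatic and I do not anticipate a genuine obstacle.
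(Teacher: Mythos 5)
Your proof is correct, and it rests on the same two pillars as the paper's: degreewise surjectivity of $G(f)$ from the approximation property, and Lemma \ref{Lemma1} combined with the minimality of $f$. The routes diverge only in which characterization of a projective cover is targeted. The paper argues by contradiction: if $G(f)$ were not a cover, the Krull--Schmidt property of $\mmod Y(e)$ would give $G(S)=P_1\oplus P_2$ with $P_2\neq 0$ and $G(f)$ vanishing on $P_2$; lifting the decomposition through Lemma \ref{Lemma1} yields $f=[f_1,0]^T$, contradicting minimality. You instead verify right minimality directly, by first upgrading Lemma \ref{Lemma1} to the statement that $G$ induces a bijection $\Hom_{D(A)}(X,S)\to\Hom_{\mmod Y(e)}(G(S),G(X))$ for all $X\in D(A)$ and $S\in\mathcal{S}_e$ --- which is indeed automatic by additivity in $S$ --- and then transporting an endomorphism $h$ with $G(f)\circ h=G(f)$ back to $\tilde h\colon S\to S$ with $\tilde h\circ f=f$, so that minimality of $f$ applies. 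What your version buys is a clean, reusable full-faithfulness statement for $G$ on morphisms into $\mathcal{S}_e$ and a direct (non-contradiction) argument; what it costs is the need to invoke the equivalence between ``right minimal epimorphism from a finitely generated graded projective'' and ``graded projective cover.'' That equivalence holds here because $Y(e)$ is graded semiperfect and $\mmod Y(e)$ is Krull--Schmidt, as the paper records just before the lemma, so there is no gap.
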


\begin{proof}
By the approximation property, $\Hom(f, S_e[i])$ is an epimorphism for all integers $i$. This means that $G(f)(i) = \Hom(f, S_e[i]): G(S)(i) \to G(M)(i)$ is an epimorphism for all $i$, and in particular, $G(f)$ is an epimorphism in $\mmod Y(e)$. Assume that $G(f)$ is not a graded projective cover. Since $\mmod Y(e)$ is Krull-Schmidt, it means that $G(S) = P_1 \oplus P_2$ in $\mmod Y(e)$ where the restriction of $G(f)$ to $P_2$ is zero while $P_2$ is non-zero. There exist $S_1, S_2 \in \mathcal{S}_e$ with $P_i = G(S_i)$ and $S_2 \ne 0$. By Lemma \ref{Lemma1}, there are unique $f_i: M \to S_i$ such that $G(f) = G ([f_1, f_2]^T) = [G(f_1), G(f_2)]^T$. In particular, $f_2 = 0$. By uniqueness of $f$, we have $f = [f_1, 0]^T$, which implies that $f$ is not minimal, a contradiction.
\end{proof}

\section{Homological relationships between $A, \Gamma$ and $Y(e)$}

In this section, we investigate the relationships between the global dimensions of $A, \Gamma$ and $Y(e)$. We start with our first result.

\begin{Prop} \label{Prop0}
Suppose that $A$ and $Y(e)$ have finite global dimension. Then $\Gamma$ has finite global dimension.
\end{Prop}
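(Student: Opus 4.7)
The idea is to prove that the functor $F$ sends ${\rm per}(A)$ into ${\rm per}(\Gamma)$, from which the result is immediate: for any $N \in \mmod\Gamma$, the $A$-module $\tilde N := N \otimes_\Gamma (1-e)A$ lies in ${\rm per}(A)$ by the finite global dimension hypothesis on $A$, and $F(\tilde N) = \tilde N(1-e) = N \otimes_\Gamma \Gamma = N$. Hence $F(\tilde N) \in {\rm per}(\Gamma)$ would say that $N$ has finite projective dimension over $\Gamma$, for every $N \in \mmod\Gamma$, and ${\rm gldim}\,\Gamma < \infty$ follows.

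Fix $M \in {\rm per}(A)$. I would iterate minimal left $\mathcal{S}_e$-approximations: set $M_0 := M$ and inductively form the triangle $M_{i+1} \to M_i \xrightarrow{f_i} S_i \to M_{i+1}[1]$, where $f_i$ is a minimal left $\mathcal{S}_e$-approximation (existence by the covariant finiteness lemma preceding Lemma \ref{Lemma1}, minimality by the remark following it). Since $A$ has finite global dimension, $S_e \in {\rm per}(A)$, so every $M_i$ remains in ${\rm per}(A)$. Apply the cohomological functor $G$: by Lemma \ref{LemmaMinimal}, $G(f_i)$ is a graded projective cover, and the long exact sequence for the triangle, combined with the surjectivity of $G(f_i)$, collapses into a short exact sequence of graded $Y(e)$-modules exhibiting $G(M_{i+1})$, up to a grading shift, as the first syzygy $\Omega\,G(M_i)$ in $\mmod Y(e)$. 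Iterating yields $G(M_k) \cong \Omega^k G(M)$ up to a grading shift; since $Y(e)$ has finite global dimension $d$, one concludes $G(M_{d+1}) = 0$.

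The vanishing $G(M_{d+1}) = 0$ means $\Hom_{D(A)}(M_{d+1}, S_e[n]) = 0$ for every $n \in \Z$. Representing $M_{d+1}$ by a minimal bounded complex $P^\bullet$ of finitely generated projective $A$-modules with radical differentials (Lemma \ref{Lemma0}), this amounts to $\Hom_A(P^{-n}, S_e) = 0$ for all $n$, forcing $P^{-n} \in {\rm add}((1-e)A)$. Consequently $F(M_{d+1})$ is a bounded complex of modules in ${\rm add}(\Gamma)$, so $F(M_{d+1}) \in {\rm per}(\Gamma)$. To propagate back to $M$, apply $F$ to each triangle $M_{i+1} \to M_i \to S_i \to M_{i+1}[1]$: because $F(S_e) = S_e(1-e) = 0$, we have $F(S_i) = 0$ for every $i$, hence $F(M_{i+1}) \cong F(M_i)$ in $D(\Gamma)$; chaining these isomorphisms gives $F(M) \cong F(M_{d+1}) \in {\rm per}(\Gamma)$. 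The main obstacle in making this rigorous is the bookkeeping in the second paragraph: carefully matching the iteration of minimal $\mathcal{S}_e$-approximations of $M$ with the minimal graded projective resolution of $G(M)$ over $Y(e)$, and tracking the grading shifts, so that the finite global dimension of $Y(e)$ can be invoked to conclude $G(M_{d+1}) = 0$.
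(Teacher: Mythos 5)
Your proposal is correct and runs on the same engine as the paper's proof: iterate minimal left $\mathcal{S}_e$-approximations, use Lemma \ref{LemmaMinimal} to identify the images under $G$ with a minimal graded projective resolution over $Y(e)$, invoke ${\rm gl.dim}\,Y(e)<\infty$ to force $G(M_r)=0$, read off that the minimal perfect complex for $M_r$ has all terms in ${\rm add}((1-e)A)$, and transport back along $F$ using $F(\mathcal{S}_e)=0$. The only (harmless) difference is the wrapper: the paper runs the argument on the simple modules $T\notin{\rm add}(S_e)$ and uses that these exhaust the simple $\Gamma$-modules via $F$, whereas you run it on all of ${\rm per}(A)$ and recover an arbitrary $N\in\mmod\Gamma$ as $F(N\otimes_\Gamma(1-e)A)$.
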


\begin{proof}
Let $T$ be a simple $A$-module not in add$(S_e)$ and take $S_1 \in \mathcal{S}_e$ with a minimal left $\mathcal{S}_e$-approximation $f_0: T \to S_1$. Consider the cone $C_1$ of $f_0$. Applying $F = R\Hom((1-e)A,-)$ to the exact triangle
$$T \to S_1 \to C_1 \to T[1],$$
and using the fact that $F$ vanishes on $\mathcal{S}_e$, we get that $F(T)$ is isomorphic to $F(C_1)[-1]$. Since $f_0$ is a left $\mathcal{S}_e$-approximation of $T$, we see that $\Hom_{D(A)}(f_0,S_e[i])$ is an epimorphism for all $i$. Therefore, by applying the functor $\Hom_{D(A)}(-, S_e)$, we have a short exact sequence
$$0 \to \Hom_{D(A)}(C_1,S_e[i]) \to \Hom_{D(A)}(S_1,S_e[i]) \to \Hom_{D(A)}(T,S_e[i]) \to 0.$$
Therefore, applying $G = \textstyle{\bigoplus}_{i \in \Z}\Hom_{D(A)}(-, S_e[i])$, we get a short exact sequence
$$0 \to G(C_1) \to G(S_1) \to G(T) \to 0$$
in $\mmod Y(e)$ where the rightmost morphism is a projective cover, by Lemma \ref{LemmaMinimal}. In general, for $i \ge 1$, take $S_i \in \mathcal{S}_e$ with a minimal left $\mathcal{S}_e$-approximation $f_{i-1}: C_{i-1} \to S_{i}$. Consider the cone $C_i$ of $f_{i-1}$. Then $F(C_i)[-i]$ is isomorphic to $F(T)$ and $G(C_i)$ is the $i$-th syzygy of $G(T)$. Since $Y(e)$ has finite global dimension, there is some $r \ge 1$ such that $G(C_r)=0$. This means that there is a complex of projective modules $P^\bullet$, quasi-isomorphic to $C_r$, such that no direct summand of a term of $P^\bullet$ lies in ${\rm add}((1-e)A)$. Since $F(C_r)[-r]$ is isomorphic to $F(T)$, we see that $F(C_r)[-r]$ is a projective resolution of $F(T)$. Since $T$ and all objects in $\mathcal{S}_e$ have bounded projective resolutions in $D^b(\mmod A)$, we see that $P^\bullet$ is bounded and hence, $F(C_r)[-r]$ is a finite projective resolution. Since all simple modules in $\mmod \Gamma$ are of the form $F(T)$ with $T$ a simple $A$-module not in ${\rm add}(S_e)$, we see that ${\rm gl.dim}\, \Gamma$ is finite by \cite[Prop. 2.2]{IP}.
\end{proof}

\begin{Remark}
In the above proposition, let $e$ be primitive. Since $A$ has finite global dimension, we see that $Y(e)$ is Artinian. Since $Y(e)$ is local, the only way it can have finite global dimension is when it is a simple ring, that is, $\Ext^i_A(S_e, S_e)=0$ for  all $i > 0$. Therefore, for $e$ primitive, the statement reads as: If $A$ has finite global dimension and $\Ext^i_A(S_e, S_e)=0$ for all $i > 0$, then $\Gamma$ has finite global dimension. This is Proposition 4.4 (3) in \cite{IP}.
\end{Remark}

We continue our investigation with the following.

\begin{Prop} \label{Prop1}
Suppose that $\Gamma$ has finite global dimension and $Y(e)$ is Artinian. Then $A$ has finite global dimension.
\end{Prop}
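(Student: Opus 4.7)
The plan is to show that every simple right $A$-module has finite projective dimension, from which ${\rm gl.dim}\,A<\infty$ follows since $A$ has only finitely many simples. The key step is to establish this for $S_e$ itself; once $S_e$ is known to be perfect, the remaining simples are handled using the left adjoint of $F$.

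\emph{Step 1: $S_e$ has finite projective dimension over $A$.} Since $Y(e)$ is Artinian, fix $N$ with $\Ext^i_A(S_e,S_e)=0$ for all $i\ge N$. For any simple $S$ in ${\rm add}(S_e)$, $\Ext^i_A(S_e,S)$ is a summand of $\Ext^i_A(S_e,S_e)$ and so vanishes for $i\ge N$. In the minimal projective resolution $P^\bullet\to S_e$, the top of $\Omega^iS_e$ then contains no simple in ${\rm add}(S_e)$ for $i\ge N$, whence $P^i\in{\rm add}((1-e)A)$ for all such $i$. Applying the exact functor $F=-\cdot(1-e)$ to the tail $\cdots\to P^{N+1}\to P^N\to \Omega^NS_e\to 0$ yields a complex of projective $\Gamma$-modules; the differentials stay radical (since ${\rm rad}(\Gamma)=(1-e){\rm rad}(A)(1-e)$), and $F(P^N)\to F(\Omega^NS_e)$ is a projective cover because its kernel lies in $P^N{\rm rad}(A)(1-e)={\rm rad}(F(P^N))$. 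Hence $F$ of this tail is the \emph{minimal} projective $\Gamma$-resolution of $F(\Omega^NS_e)$, and finiteness of ${\rm gl.dim}\,\Gamma$ forces termination: $P^i=0$ for $i$ sufficiently large, so ${\rm pd}_A(S_e)<\infty$.

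\emph{Step 2: any simple $T\notin{\rm add}(S_e)$ has finite projective dimension over $A$.} Then $F(T)=T$ is a simple $\Gamma$-module with a finite projective $\Gamma$-resolution $Q^\bullet\to F(T)$. The derived tensor product $L:=-\otimes^L_\Gamma(1-e)A\colon D(\Gamma)\to D(A)$ is left adjoint to $F$, and on a projective generator $e_k\Gamma$ (with $e_k\in{\rm add}(1-e)$) one computes $L(e_k\Gamma)=e_kA$ and $F(e_kA)=e_kA(1-e)=e_k\Gamma$, so the unit $\eta\colon{\rm id}_{D(\Gamma)}\to FL$ is an isomorphism. Hence $LF(T)$ is represented by the bounded complex $Q^\bullet\otimes_\Gamma(1-e)A$ of projective $A$-modules, so $LF(T)\in{\rm per}\,A$. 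Let $K$ be the cone of the counit $\epsilon\colon LF(T)\to T$; the triangle identity $F(\epsilon)\circ\eta_{F(T)}={\rm id}_{F(T)}$ with $\eta_{F(T)}$ invertible forces $F(\epsilon)$ to be an isomorphism, so $F(K)=0$. Thus every cohomology module of $K$ has all its composition factors in ${\rm add}(S_e)$; by Step 1 each such simple has finite projective dimension over $A$, and since this property is preserved under extensions and $K$ is bounded, $K\in{\rm per}\,A$. The exact triangle $LF(T)\to T\to K\to LF(T)[1]$ then gives $T\in{\rm per}\,A$, i.e.\ ${\rm pd}_A(T)<\infty$.

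The main obstacle is Step 1: ensuring that the complex obtained by applying $F$ to the tail of the $A$-resolution of $S_e$ is truly a \emph{minimal} projective $\Gamma$-resolution, because finiteness of ${\rm gl.dim}\,\Gamma$ only bounds the lengths of minimal resolutions. The key point is the identity ${\rm rad}(\Gamma)=(1-e){\rm rad}(A)(1-e)$, which makes radical morphisms between modules in ${\rm add}((1-e)A)$ remain radical after applying $F$; once Step 1 is in place, Step 2 is formally a consequence of the adjunction and the first step.
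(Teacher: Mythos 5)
Your Step 1 is essentially the paper's own argument for simples in ${\rm add}(S_e)$: since $Y(e)$ is Artinian the minimal resolution of $S_e$ eventually has all terms in ${\rm add}((1-e)A)$, and applying $F$ to that tail gives a minimal $\Gamma$-resolution which must terminate. That part is correct. Your Step 2, however, departs from the paper --- which handles simples $T \notin {\rm add}(S_e)$ by running the left-module version of Step 1 (using ${\rm gl.dim}\,\Gamma^{\rm op} = {\rm gl.dim}\,\Gamma$) to show the left module $Ae/{\rm rad}(A)e$ has finite flat dimension, and then using ${\rm Tor}^i_A(T, Ae/{\rm rad}(A)e)=0$ to kill the $eA$-summands in the minimal resolution of $T$ --- and your alternative contains a genuine gap.

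The gap is in the step ``every cohomology module of $K$ has all its composition factors in ${\rm add}(S_e)$; \dots since this property is preserved under extensions and $K$ is bounded, $K \in {\rm per}\,A$.'' The paper's standing hypotheses are only that $A$ is Noetherian and semiperfect, not that it is an Artin algebra, so a finitely generated module $M$ with $M(1-e)=0$ (i.e.\ $M \in \T$) need not have finite length and need not admit a composition series at all. (For instance, with $A = \Z_{(p)} \times k$ and $e=(1,0)$, the hypotheses of the proposition hold, yet $\Z_{(p)} \in \T$ is finitely generated of infinite length.) The cohomology modules $H^i(K) \cong {\rm Tor}^\Gamma_{-i-1}(F(T),(1-e)A)$ are finitely generated $A$-modules in $\T$, but nothing forces them to be finite length, so the extension/dévissage argument reducing $K$ to the simples of ${\rm add}(S_e)$ does not apply; and Step 1 only gives finite projective dimension for those simples, not for arbitrary finitely generated objects of $\T$. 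Your proof is complete if $A$ is assumed to be an Artin algebra, where finitely generated equals finite length, and in that setting your adjunction argument (perfectness of $LF(T)$ plus $F({\rm cone}(\epsilon))=0$) is an attractive alternative to the paper's Tor computation; but to prove the proposition in the stated generality you would need either the paper's opposite-ring/flat-dimension argument or some other device to show that bounded complexes with cohomology in $\T$ are perfect.
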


\begin{proof}
Let $T$ be a simple $A$-module. Denote by $\Omega_j$ its $j$-th syzygy. Assume first that $T$ lies in add$(S_e)$. Since $Y(e)$ is Artinian, all but finitely many $Y(e)(i)$ are zero. Therefore, there is some $j \ge 0$ such that $\Omega_j$ satisfies $\Ext^i_A(\Omega_j, S_e)=0$ for all $i \ge 0$. This means that a minimal projective resolution of $\Omega_j$ has all of its terms in add$((1-e)A)$. Then, the projective dimension of $F(\Omega_j)$, which is finite since ${\rm gl.dim}\, \Gamma < \infty$, coincides with the projective dimension of $\Omega_j$. Now, the projective dimension of $T$ is $j$ plus the projective dimension of $\Omega_j$, and thus is finite. Since $A$ is Noetherian, a finitely generated module is projective if and only if it is flat. Therefore, $S_e$ has finite flat dimension. Now, observe that $\Gamma$ is left and right Noetherian. Therefore, we have ${\rm gl.dim} \Gamma^{\rm \, op} = {\rm gl.dim} \Gamma$. Therefore, using a similar argument as above, we get that $Ae/{\rm rad A}\,e$ has finite flat dimension in $\mmod A^{\, \rm op}$. In particular, there exists some $m > 0$ with ${\rm Tor}^i_A(S_e, -)=0, {\rm Tor}^i_A(-, Ae/{{\rm rad}A}\,e)=0$ whenever $i \ge m$.
Let now consider the case where $T$ is simple not in add$(S_e)$. Since ${\rm Tor}^i_A(T, Ae/{{\rm rad}A}\,e)=0$ for $i \ge m$, it means that no direct summand of $eA$ appears in a minimal projective resolution of $\Omega_m$. As argued above, this implies that $T$ has finite projective dimension. Since $A$ is Noetherian semiperfect, the global dimension of $A$ is the supremum of the projective dimensions of the simple modules in $\mmod A$; see \cite[Prop. 2.2]{IP}. The statement follows.
\end{proof}

\begin{Remark} In the previous proposition, one cannot replace the condition "$Y(e)$ is Artinian" by the condition "$Y(e)$ has finite global dimension". For instance, consider the Nakayama algebra $A$ of rank $3$ over a field $k$ with a vanishing radical squared. In other words, $A$ is the Koszul algebra of the oriented cycle of length $3$ with all possible quadratic relations. Take $e$ to be any primitive idempotent. Then $\Gamma$ is hereditary and $Y(e)$ is a polynomial algebra over $k$. Indeed, $Y(e)$ is an idempotent subalgebra of the quadratic dual $Y(1) = A^!$ of $A$, where $A^!$ is the Koszul algebra of the oriented cycle of length $3$ without relations. In particular, both $\Gamma, Y(e)$ have finite global dimensions. However, $A$ has infinite global dimension.
\end{Remark}

The next result follows directly from propositions \ref{Prop0} and \ref{Prop1}.

\begin{Cor} Assume that $Y(e)$ is Artinian and has finite global dimension. Then $A$ has finite global dimension if and only if $\Gamma$ has finite global dimension.
\end{Cor}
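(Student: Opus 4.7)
The plan is to observe that the corollary is essentially nothing more than the conjunction of Proposition \ref{Prop0} and Proposition \ref{Prop1}, applied under the hypothesis that $Y(e)$ enjoys both of the properties that those two propositions require separately. Since the assumption packages together ``$Y(e)$ Artinian'' and ``$Y(e)$ has finite global dimension,'' each implication follows immediately by invoking the appropriate one of the two results.

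For the forward direction, I would assume that $A$ has finite global dimension. Combined with the standing hypothesis that $Y(e)$ has finite global dimension, Proposition \ref{Prop0} applies verbatim and yields that $\Gamma$ has finite global dimension. For the backward direction, I would assume that $\Gamma$ has finite global dimension. Combined with the standing hypothesis that $Y(e)$ is Artinian, Proposition \ref{Prop1} applies verbatim and yields that $A$ has finite global dimension.

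Since both directions are immediate, there is no real obstacle to overcome; the entire argument is a bookkeeping exercise verifying that the hypotheses of the two propositions are satisfied. In particular, there is no need to re-enter the derived-category/Yoneda-ring machinery from the proofs of Propositions \ref{Prop0} and \ref{Prop1}, nor to reprove any projective-cover or approximation statements from Section 3. The proof should therefore consist of essentially two sentences, one for each direction, each citing the relevant proposition.
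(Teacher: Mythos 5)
Your proposal is correct and is exactly the paper's argument: the paper states that the corollary ``follows directly from propositions \ref{Prop0} and \ref{Prop1},'' with the forward direction given by Proposition \ref{Prop0} (using the finite global dimension of $Y(e)$) and the backward direction by Proposition \ref{Prop1} (using that $Y(e)$ is Artinian). Nothing further is needed.
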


The following result is surprising.

\begin{Cor} Let $A$ be finite dimensional over a field $k = \bar k$ and assume that $e$ is primitive. Assume further that ${\rm gl.dim}\, \Gamma < \infty$. Then either $\Ext^i_A(S_e, S_e)$ is non-zero for infinitely many $i$, or else it vanishes for all positive $i$.
\end{Cor}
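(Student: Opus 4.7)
The plan is to prove the contrapositive: suppose $\Ext^i_A(S_e, S_e)$ is non-zero for only finitely many $i$, so it suffices to deduce that $\Ext^i_A(S_e, S_e)=0$ for every $i>0$. Since $A$ is finite-dimensional over $k$, each graded piece of $Y(e)$ is a finite-dimensional $k$-vector space, and the assumption therefore makes $Y(e)$ a finite-dimensional $k$-algebra, so in particular $Y(e)$ is (left and right) Artinian. Combining this with the standing hypothesis ${\rm gl.dim}\,\Gamma<\infty$, Proposition \ref{Prop1} yields ${\rm gl.dim}\,A<\infty$.

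Next I exploit primitivity of $e$ and the algebraic closure $k=\bar k$ to pin down the structure of $Y(e)$. These force $\End_A(S_e)=k$, so the degree-zero piece $\Hom_A(S_e, S_e)$ of $Y(e)$ equals $k$, making $Y(e)$ a finite-dimensional positively graded connected $k$-algebra, hence a local Artinian ring with residue field $k$. I then apply the structural observation recorded in the Remark after Proposition \ref{Prop0}: a local Artinian ring of finite global dimension must be a simple (division) ring. Combined with the degree-zero piece being $k$, this would force $Y(e)=k$, which is precisely $\Ext^i_A(S_e, S_e)=0$ for every $i>0$.

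The remaining task, and the step I expect to be the real obstacle, is to show that $Y(e)$ itself has finite global dimension under our hypotheses. I plan to do this by running in the reverse direction the $\mathcal{S}_e$-approximation argument of Proposition \ref{Prop0}. Concretely: for every simple right $A$-module $T\notin{\rm add}(S_e)$, the finiteness ${\rm gl.dim}\,\Gamma<\infty$ provides a bounded projective resolution of $F(T)$ in $\mmod\Gamma$. Iterating minimal $\mathcal{S}_e$-approximations of $T$ produces cones $C_r\in D(A)$ with $F(C_r)$ a shift of $F(T)$ and, via Lemma \ref{LemmaMinimal}, with $G(C_r)$ identified as the $r$-th syzygy of $G(T)$ in $\mmod Y(e)$. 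The delicate point is to leverage the bounded $\Gamma$-projective resolution of $F(T)$, together with the minimality structure furnished by Lemmas \ref{Lemma0} and \ref{Lemma1}, to force $G(C_r)=0$ for $r$ sufficiently large; equivalently, to show that each $G(T)$ has finite projective dimension over $Y(e)$. Since $G(S_e)=Y(e)$ is free and the simple $A$-modules generate $D(A)$, a uniform bound on these projective dimensions bounds ${\rm gl.dim}\,Y(e)$ and, via the reduction above, completes the proof.
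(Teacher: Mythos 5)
Your first step matches the paper exactly: finiteness of the set of nonvanishing $\Ext^i_A(S_e,S_e)$ makes $Y(e)$ finite dimensional, hence Artinian, and Proposition \ref{Prop1} then gives ${\rm gl.dim}\,A<\infty$. Your structural observation is also sound: since $e$ is primitive and $k=\bar k$, $Y(e)$ is a connected positively graded finite-dimensional algebra, hence local Artinian, and a local Artinian ring of finite global dimension is a division ring, which would force $Y(e)=k$. The problem is the step you yourself flag as ``the real obstacle'': proving that $Y(e)$ has finite global dimension. This is precisely Conjecture \ref{conj} in the primitive case, and it is not obtainable by ``running Proposition \ref{Prop0} in reverse.'' In Proposition \ref{Prop0} the implication goes from ${\rm gl.dim}\,Y(e)<\infty$ to the eventual vanishing of the syzygies $G(C_r)$; the converse direction --- deducing $G(C_r)=0$ from a bounded $\Gamma$-projective resolution of $F(T)$ --- is the hard content of Theorem \ref{Theo1}, and the paper's proof of that theorem depends essentially on the grading and on $S_e$ having a \emph{linear} projective resolution (Lemma \ref{Lemma3} compares graded degrees of cohomology and fails without linearity). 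No such hypothesis is available here. There is a second unjustified reduction in your sketch: even if every $G(T)$ had finite projective dimension over $Y(e)$, this would not bound ${\rm gl.dim}\,Y(e)$, because the simple graded $Y(e)$-module (the top of $G(S_e)=Y(e)$) is not of the form $G(T)$ for a simple $A$-module $T$, and $G$ does not take arbitrary triangles to short exact sequences.

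The paper takes a completely different and much shorter route at this point: having obtained ${\rm gl.dim}\,A<\infty$, it invokes the Strong No Loop Conjecture (proved in \cite{ILP} for finite-dimensional elementary algebras, which is where $k=\bar k$ is used) to conclude $\Ext^1_A(S_e,S_e)=0$, and then applies \cite[Theo.~6.5]{IP}, which says that for $e$ primitive with $A$ and $\Gamma$ of finite global dimension and $\Ext^1_A(S_e,S_e)=0$, all higher self-extensions of $S_e$ vanish. These two external results are exactly what make the primitive case of Conjecture \ref{conj} true, and your argument cannot close without importing them (or the Koszul-type hypotheses of Section 5). As written, the proposal has a genuine gap at its central step.
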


\begin{proof}Assume that $\Ext^i_A(S_e, S_e)$ vanishes for $i$ sufficiently large. Then $Y(e)$ is finite dimensional. It follows from Proposition \ref{Prop1} that $A$ has finite global dimension. It follows form the validity of the Strong no loop conjecture in this setting, see \cite{ILP}, that $\Ext^1_A(S_e, S_e)=0$. Now, it follows from \cite[Theo. 6.5]{IP} that $\Ext^i_A(S_e, S_e)=0$ for all $i > 0$.
\end{proof}

The next step in our investigation would be to assume the finiteness of the global dimensions of $A, \Gamma$ and see if we can get some homological properties for $Y(e)$. In general, the fact that both $A, \Gamma$ have finite global dimension does not imply that $Y(e)$ has finite global dimension. Indeed, by taking the extreme case $e=1$, we reduce to the question of whether $A$ being of finite global dimension implies that the Yoneda ring $Y(1)$ of $A$ is of finite global dimension. This is not true and it is easy to find counter-examples of this statement.
 The other extreme case is when $e$ is primitive.
In this case, it was proven in \cite{IP} that if $A$ is a $k$-algebra over a field $k$, $A/{\rm rad}A$ is finite dimensional and $\Ext^1_A(S_e, S_e)=0$, then $A, \Gamma$ both have finite global dimensions imply that $\Ext^1_A(S_e, S_e)=0$ for all $i > 0$. In particular, $Y(e)$ has finite global dimension (it is a one dimensional $k$-algebra). Therefore, a condition on $e$ seems necessary.

\medskip

The \emph{Ext-quiver} $Q_e$ of $e$ is obtained as follows. Decompose $e$ into a sum of ${\rm rk}(e)$ pairwise orthogonal primitive idempotents, say $e = e_1 + \cdots + e_{{\rm rk}(e)}$. The vertices of $Q_e$ are $e_1, \ldots, e_{{\rm rk}(e)}$. For $1 \le i,j \le {\rm rk}(e)$, we put an arrow between $e_i$ and $e_j$ if $\Ext^1_A(S_{e_i}, S_{e_j}) \ne 0$. Observe that if $A=kQ/I$ where $Q$ is a finite quiver, $k$ is a field and $I$ is an admissible ideal of $kQ$, then $Q_e$ is the full subquiver of $Q$ corresponding to the vertices $e_1, e_2, \ldots, e_{{\rm rk}(e)}$, where parallel arrows are identified.
We will call $e$ \emph{acyclic} if $Q_e$ has no oriented cycles. Observe that if $A=kQ/I$ is as above and $e$ is primitive and the projective dimension of $S_e$ is finite (or if the global dimension of $A$ is finite), then $e$ is acyclic. This indeed follows from the validity of the Strong no loop conjecture; see \cite{ILP}. We make the following conjecture.

\medskip

\begin{Conj} \label{conj} Assume that $e$ is acyclic. If both $A, \Gamma$ have finite global dimensions, then so does $Y(e)$.
\end{Conj}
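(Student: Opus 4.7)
The plan is to proceed by induction on $r = {\rm rk}(e)$, combining the reduction techniques of Section~4 applied to $A$ with their analogues applied internally to the graded Yoneda algebra $Y(e)$.

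For the base case $r = 1$, $e$ is primitive and acyclicity forces $\Ext^1_A(S_e, S_e) = 0$. The finite global dimension of $A$ bounds the non-vanishing degrees of $Y(e)$, making it finite-dimensional; combined with the finite global dimension of $\Gamma$, the argument of \cite[Theorem~6.5]{IP} underlying Corollary~4.5 of the paper yields $\Ext^i_A(S_e, S_e) = 0$ for all $i > 0$. Hence $Y(e) = \End_A(S_e)$ is semisimple of global dimension zero.

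For the inductive step $r \ge 2$, choose a topological ordering $e_1, \ldots, e_r$ of the primitive summands of $e$ so that every arrow of $Q_e$ goes from $e_i$ to $e_j$ with $i < j$; set $f = e - e_1$, of rank $r-1$, and let $\varepsilon_1 \in Y(e)(0)$ be the idempotent at $e_1$. The graded identity
\[
(1-\varepsilon_1)\,Y(e)\,(1-\varepsilon_1) \;=\; \bigopls_{i \ge 0}\Ext^i_A(S_f, S_f) \;=\; Y_A(f)
\]
exhibits $Y_A(f)$ as an idempotent subring of the Artinian graded ring $Y(e)$. The plan is to apply Proposition~\ref{Prop1} inside $\mmod Y(e)$ to the pair $(Y(e), \varepsilon_1)$, which requires $(i)$ $Y_A(f)$ of finite global dimension and $(ii)$ the iterated Yoneda algebra $\Ext^*_{Y(e)}(T_1, T_1)$ Artinian, where $T_1$ is the simple graded $Y(e)$-module at $\varepsilon_1$.

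Ingredient $(i)$ is to be reduced to the induction hypothesis applied to $f$, which in turn requires that $(1-f)A(1-f)$ has finite global dimension. One expects to obtain this by combining the given finite global dimensions of $A$ and of $\Gamma = (1-e_1)\bigl((1-f)A(1-f)\bigr)(1-e_1)$ with an analysis of $e_1$ as a primitive idempotent inside $(1-f)A(1-f)$; here one must verify that the no-loop condition at $e_1$ is preserved under the restriction and break the apparent circularity by a simultaneous induction on rank. Ingredient $(ii)$ is the \textbf{main obstacle}: the Artinianity of $\Ext^*_{Y(e)}(T_1, T_1)$ does not follow formally from the Artinianity of $Y(e)$. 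The intended route goes through the functor $G : D(A) \to \mmod Y(e)$ of Section~3: starting from $S_{e_1}$ and iterating minimal left $\mathcal{S}_e$-approximations produces a tower whose cones $C_i$ have images $G(C_i)$ equal to the successive syzygies of $T_1$ in $\mmod Y(e)$. The source property of $e_1$ in $Q_e$, together with the finite global dimension of $A$, should force these syzygies to live in a uniformly bounded range of graded degrees, yielding the desired Artinianity. Once $(i)$ and $(ii)$ are in place, Proposition~\ref{Prop1} applied to $(Y(e), \varepsilon_1)$ completes the induction.
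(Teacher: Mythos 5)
The statement you are trying to prove is stated in the paper as a \emph{conjecture}: the authors do not prove it in general, and only establish it in two special cases (for $e$ primitive over a field with $A/{\rm rad}A$ finite dimensional, via \cite{IP}; and for $A$ finite dimensional positively graded with $S_e$ having a linear projective resolution, via Theorem \ref{Theo1}, whose argument is a direct cohomological bookkeeping on iterated minimal left $\mathcal{S}_e$-approximations and does not induct on ${\rm rk}(e)$). So there is no proof in the paper to measure your attempt against, and your text is in any case an outline rather than a proof: it would not close the conjecture.

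Concretely, the gaps are the ones you partly flag yourself, and they are fatal as written. First, ingredient $(ii)$ --- the Artinianity of $\Ext^*_{Y(e)}(T_1,T_1)$ --- is exactly where the difficulty of the conjecture is concentrated, and nothing in Section~3 of the paper delivers it: the functor $G$ identifies $G(C_i)$ with syzygies of $T_1$, but there is no mechanism that bounds the graded degrees in which these syzygies live, and the acyclicity of $e$ enters your argument only as a heuristic ("should force"). Second, ingredient $(i)$ is circular: to apply the induction hypothesis to $f = e - e_1$ you need ${\rm gl.dim}\,(1-f)A(1-f) < \infty$, which is not among the hypotheses; deducing it from ${\rm gl.dim}\,A < \infty$ via Proposition \ref{Prop0} would require knowing that $Y(e_1)$ has finite global dimension, i.e.\ that $\Ext^i_A(S_{e_1},S_{e_1})=0$ for all $i>0$, which is a statement of the same nature as the conjecture (acyclicity only gives the vanishing of $\Ext^1$). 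Third, even the base case is not available in the generality of the conjecture: the implication $\Ext^1_A(S_e,S_e)=0 \Rightarrow \Ext^i_A(S_e,S_e)=0$ for all $i>0$ is quoted from \cite[Theo.~6.5]{IP} only for $k$-algebras with $A/{\rm rad}A$ finite dimensional, whereas the conjecture concerns arbitrary Noetherian semiperfect rings. The one piece of your plan that is sound is the identification $(1-\varepsilon_1)Y(e)(1-\varepsilon_1) \cong \bigoplus_{i\ge 0}\Ext^i_A(S_f,S_f)$ as graded rings, and the idea of running Proposition \ref{Prop1} internally to $Y(e)$ is a reasonable reduction scheme; but until $(i)$ and $(ii)$ are actually established, the conjecture remains open.
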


As observed above, this conjecture holds if $e$ is primitive, $A$ is an algebra over a field $k$ and $A/{\rm rad}A$ is finite dimensional. In the next section, we will see that the conjecture holds true when $A$ is finite dimensional positively graded with $S_e$ having a linear projective resolution. In particular, the conjecture holds true when $A$ is finite dimensional Koszul. Moreover, acyclicity of $e$ is not needed in this case.

\medskip

Let us denote by $\T$ the Serre subcategory of $\mmod A$ generated by the objects in add$(S_e)$. In other words, $\T$ is the full subcategory of the modules $M$ with $\Hom_A((1-e)A, M)=0$. The following holds in general but will be used in the next section to establish the conjecture when $A$ is Koszul.

\begin{Lemma} \label{LemmaSplit}
Let $f:Q \to P$ be a morphism in $\mmod A$ with $P$ projective in ${\rm add}((1-e)A)$ such that $F(f) = \Hom_A((1-e)A,f)$ is a section. Then $Q = Q_1 \oplus Q_2$ where $Q_1$ is projective in ${\rm add}((1-e)A)$ and $Q_2 \in \T$.
\end{Lemma}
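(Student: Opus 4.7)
The plan is to invoke the adjoint pair $L \dashv F$ between $\mmod \Gamma$ and $\mmod A$, where $L = -\otimes_\Gamma (1-e)A$ is left adjoint to $F = \Hom_A((1-e)A, -)$. The unit $\eta_N \colon N \to FL(N)$ is an isomorphism (indeed $FL(N) = N\otimes_\Gamma (1-e)A(1-e) = N$), while the counit $\epsilon_M \colon LF(M) \to M$ sending $x \otimes a \mapsto xa$ is an isomorphism for $M \in {\rm add}((1-e)A)$ and vanishes for $M \in \T$. By naturality of $\epsilon$, one has the commuting identity $f \circ \epsilon_Q = \epsilon_P \circ LF(f)$.

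The first step is to show that $\epsilon_Q \colon LF(Q) \to Q$ is a section. Since $F(f)$ is a section and $L$ is additive, $LF(f)$ is a section in $\mmod A$; composing with the isomorphism $\epsilon_P$ (which is where the hypothesis $P \in {\rm add}((1-e)A)$ enters) gives that $f \circ \epsilon_Q = \epsilon_P \circ LF(f)$ is a section in $\mmod A$. Pick $\rho \colon P \to LF(Q)$ with $\rho \circ f \circ \epsilon_Q = \mathrm{id}_{LF(Q)}$; then $\rho \circ f$ is a retraction of $\epsilon_Q$, so the latter is a split monomorphism.

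The second step is to identify the two pieces in the resulting decomposition $Q = \mathrm{im}(\epsilon_Q) \oplus \ker(\rho \circ f)$. For the first summand, since $F(f)$ is a section, $F(Q)$ is a direct summand of $F(P) = \Gamma^n$ and hence finitely generated projective over $\Gamma$; applying $L$ shows that $LF(Q)$ is a direct summand of $L(\Gamma^n) = ((1-e)A)^n$, so $\mathrm{im}(\epsilon_Q) \cong LF(Q) \in {\rm add}((1-e)A)$. For the second summand, I would apply the exact functor $F$ to the short exact sequence $0 \to LF(Q) \to Q \to \mathrm{coker}(\epsilon_Q) \to 0$ and use the triangle identity $F(\epsilon_Q) \circ \eta_{F(Q)} = \mathrm{id}_{F(Q)}$, combined with the fact that $\eta$ is an isomorphism, to conclude that $F(\epsilon_Q)$ is an isomorphism. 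Consequently $F(\mathrm{coker}(\epsilon_Q)) = 0$, and therefore $\ker(\rho \circ f) \cong \mathrm{coker}(\epsilon_Q) \in \T$, giving the desired $Q_2$.

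The main technical obstacle is the first step, namely showing that $\epsilon_Q$ itself splits; this is precisely where the section hypothesis on $F(f)$ is used, propagated via the additive functor $L$ and the naturality of $\epsilon$. Once $\epsilon_Q$ is known to split, everything else is a routine manipulation of the adjunction and its triangle identities.
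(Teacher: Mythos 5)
Your proof is correct, but it takes a genuinely different route from the paper's. The paper works inside $\mmod A$: it forms the maximal quotient $S$ of $Q$ lying in $\T$ (equivalently, the submodule $Q'=Q(1-e)A$), observes that $\Hom_A(Q',S_e)=0$, and then uses the semiperfect structure --- tops and radicals --- to show that the induced map on tops $Q'/{\rm rad}\,Q'\to P/{\rm rad}\,P$ is injective, whence $Q'\to P$ lands isomorphically onto a direct summand and the sequence $0\to Q'\to Q\to S\to 0$ splits. You instead run the standard adjunction $L=-\otimes_\Gamma(1-e)A\dashv F$ and show directly that the counit $\epsilon_Q\colon LF(Q)\to Q$ is a split monomorphism, using that $LF(f)$ is a section and $\epsilon_P$ is invertible; the triangle identity then puts the complement in $\T$, and projectivity of $Q_1\cong LF(Q)$ follows since $F(Q)$ is a summand of the projective $\Gamma$-module $F(P)$. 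Your argument is more conceptual and avoids any appeal to radicals, Krull--Schmidt, or semiperfectness, and it identifies $Q_1$ canonically as the trace $Q(1-e)A\cong F(Q)\otimes_\Gamma(1-e)A$ (which is in fact the same submodule $Q'$ the paper constructs). Both proofs also yield the slightly stronger fact, used later in the paper, that the restriction of $f$ to $Q_1$ is a section. One trivial imprecision: $F(P)$ is a direct summand of $\Gamma^n$ rather than equal to it, but this changes nothing since a summand of a summand is a summand.
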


\begin{proof}
Let $K$ be the kernel of $f$. Since $F(f)$ is a monomorphism, $F(K)=0$, so $K \in \T$. Let $E$ be the set of the submodules $X$ of $Q$ with $Q/X \in \mathcal{T}$. Let $Q':=\textstyle{\bigcap}_{X \in E}X$. It is straightforward to check that $S:=Q/Q'$ is the maximal quotient of $Q$ lying in $\T$. Thus, we have a short exact sequence $$0 \to Q' \stackrel{h}{\to} Q \to S \to 0$$
with $\Hom(Q', S_e)=0$. Moreover, $Q',S$ are finitely generated since $A$ is Noetherian. We have $F(fh)=F(f)F(h)$ where $F(h)$ is an isomorphism. Thus, $F(fh)$ is a section. Let $t: Q'/{\rm rad} Q' \to P/{\rm rad}P$ be the morphism induced from $fh$ on the respective tops of $Q', P$. Observe that $F(Q'/{\rm rad} Q')$ is a direct summand of $F(Q')/{\rm rad}F(Q')$ and similarly, $F(P/{\rm rad} P)$ is a direct summand of $F(P)/{\rm rad}F(P)$. Since $F(fh)$ is a section, it induces an injective morphism $F(Q')/{\rm rad}F(Q') \to F(P)/{\rm rad}F(P)$. This implies that $F(t)$ is injective and hence, that $t$ is injective. Now, the image of $t$ defines a direct summand $R$ of $P$ such that the co-restriction $f': Q \to R$ of $f$ to $R$ is such that $f'h$ is an isomorphism. Therefore, $h$ is a section, so $Q \cong Q' \oplus S$ with $Q'$ finitely generated projective.
\end{proof}

\begin{Remark}[Recollements] It is well known that the idempotent $1-e$ induces a recollement of ${\rm Mod}\,A$ by ${\rm Mod}\, \Gamma$ and ${\rm Mod}\, A/A(1-e)A$. However, in general, the three rings $A, \Gamma, A/A(1-e)A$ are very different in their homological aspects. In particular, the finiteness of the global dimension of $A$ does not imply the finiteness of the global dimensions of the rings $\Gamma, A/A(1-e)A$. There is a slightly different situation when considering derived categories. Consider $\mathcal{X}$ the smallest full triangulated subcategory of the (unbounded) derived category $\mathcal{D}(A)$ of $A$ containing $(1-e)A$ and that is closed under small coproducts. It is not hard to see that $F$ induces a triangle-equivalence between $\mathcal{X}$ and the (unbounded) derived category $\mathcal{D}(\Gamma)$ of $\Gamma$. Now, set $\mathcal{W}$ the full subcategory of $\mathcal{D}(A)$ of objects $W$ with $\Hom_{\mathcal{D}(A)}(X,W)=0$ for all $X \in \mathcal{X}$. Clearly, $\mathcal{W}$ is a triangulated subcategory of $\mathcal{D}(A)$ and coincides with the full triangulated subcategory of $\mathcal{D}(A)$ of the objects having cohomologies annihilated by $(1-e)$. Consider the dg-ring $B:=R\Hom(S_e, S_e)$. It follows from \cite[Prop. 3.4]{BP} that there is a recollement of $\mathcal{D}(A)$ by $D(B)$ and $\mathcal{D}(\Gamma)$. Again, the finiteness of the global dimension of $A$ does not seem to imply any nice (obvious) homological behavior for $D(B)$ and $\mathcal{D}(\Gamma)$. Notice, though, that the cohomology ring of $B$ is precisely $Y(e)$.
\end{Remark}

\section{Positively graded finite dimensional algebras}

Let $k$ be a field. In this section, we assume that $A$ is a finite dimensional $k$-algebra that is positively graded, that is, we have $$A = A(0) \oplus A(1) \oplus \cdots$$ as a $k$-vector space, $A(0)$ is a finite product of copies of $k$ and for $i, j \ge 0$, we have $A(i) A(j) \subseteq A(i+j)$. All $A$-modules considered will be graded right $A$-modules. Given a graded module $M = \textstyle{\bigoplus}_{j \in \Z} M(j)$, we let $M\langle i \rangle$ denote the graded module $M'$ with $M'(j) = M(i+j)$.

\medskip

Recall that $A$ is \emph{Koszul} if (it is positively graded and) the semi-simple $A$-module $A(0)$ has a linear (graded) projective resolution. In other words, there is a graded projective resolution
$$\cdots \to P_1 \to P_0 \to A(0) \to 0$$
of $A(0)$ such that for $i \ge 0$, the projective module $P_i$ is finitely generated in degree $i$, so is a (graded) direct summand of a direct sum of copies of $A\langle -i \rangle$. In this case, the given projective resolution has to be minimal, since the morphisms of the projective resolution are all radical.
Observe that $S_e$ is a direct summand of $A(0)$. In this section, we prove Conjecture \ref{conj} in case $S_e$ has a linear projective resolution. Observe that if $S_e$ has a linear projective resolution, then any $S \in \mathcal{S}_e$ may be given by a complex of projective modules with all morphisms linear. Indeed, let $S \in \mathcal{S}_e$, so $S$ is isomorphic to $S_1 \oplus \cdots \oplus S_r$ where each $S_i$ is a shift (as a complex) by $t_i$ of an indecomposable simple $A$-module in add$(S_e)$. Then we may replace $S$ by $S_1\langle -t_1 \rangle \oplus \cdots \oplus S_r\langle -t_r \rangle$, since the graded shift does not change the underlying module. It is now clear that $S$ can be given by a complex of finitely generated projective modules
$$\cdots \to Q^{-2} \to Q^{-1} \to Q^0 \to \cdots$$
that is bounded above and where $Q^{-i}$ is generated in degree $i$ for all $i$. Using this fact, we get the following.

\begin{Lemma} \label{HomDegreeZero}Let $S_1, S_2 \in \mathcal{S}_e$ be represented by linear complexes of projective modules and $f: S_1 \to S_2$. Then $f=(f_i)_{i \in \Z}$ can be chosen, up to homotopy, so that all $f_i$ are homogeneous of degree zero.
\end{Lemma}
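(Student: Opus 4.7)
\medskip

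\noindent\textbf{Proof plan.} The plan is to split $f$ into its internal-degree-homogeneous components, observe that the negative-degree components vanish automatically and the positive-degree components are null-homotopic, and retain only the degree-zero part as the desired representative. I would write $f_i=\sum_{d\in\Z}f_i^{(d)}$, where $f_i^{(d)}\colon P^{-i}\to Q^{-i}$ is the piece shifting internal degree by $d$; this is a finite sum because $A$ is finite dimensional. Since both differentials are given by matrices of elements of $A(1)$, they are homogeneous of internal degree $0$, so the chain-map identity decomposes degreewise and each $f^{(d)}=(f^{(d)}_i)_i$ is itself a chain map. For $d<0$, the generators of $P^{-i}$ sit in internal degree $i$ and would be sent to degree $i+d<i$ of $Q^{-i}$, a range in which $Q^{-i}$ vanishes; hence $f^{(d)}=0$. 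It then remains to null-homotope each $f^{(d)}$ with $d>0$ by a homotopy of internal degree $d$.

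The key structural input I would use is the following bidegree concentration statement for the cohomology of the target. By the description given in the paragraph before the lemma, $Q^\bullet$ may be taken to be a direct sum of summands of the form $T_j\langle -t_j\rangle[t_j]$, with each $T_j \in {\rm add}(S_e)$ an indecomposable simple concentrated in internal degree $0$. The graded cohomology of such a summand lives in cohomological degree $-t_j$ and internal degree $t_j$, and assembling summands shows that $H^{-i}(Q^\bullet)$ is concentrated in internal degree $i$. In particular, in every internal degree different from $i$ one has $\ker(d_Q^{-i})=\operatorname{im}(d_Q^{-i-1})$.

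Fixing $d>0$, I would construct $h_i\colon P^{-i}\to Q^{-i-1}$ homogeneous of internal degree $d$ by induction on $i$, starting with $h_i=0$ for $i$ sufficiently negative that $P^{-i}=0$ (using that $P^\bullet$ is bounded above). Given $h_j$ for $j<i$, set $g:=f_i^{(d)}-h_{i-1}d_P$. A routine computation using the chain-map identity for $f^{(d)}$, the inductive hypothesis, and $d_P^2=0$ shows $d_Q g=0$. Since the image of $g$ lies in internal degrees $\geq i+d>i$, it avoids the unique internal degree $i$ in which $H^{-i}(Q^\bullet)$ is supported, whence $\operatorname{im}(g)\subseteq\operatorname{im}(d_Q^{-i-1})$. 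Projectivity of $P^{-i}$ yields a lift $\tilde h_i$ with $d_Q\tilde h_i=g$, and since $d_Q$ is itself homogeneous of internal degree $0$ and $g$ is homogeneous of degree $d$, the degree-$d$ component $h_i:=\tilde h_i^{(d)}$ remains a lift and is homogeneous. Summing $h:=\sum_{d>0}h^{(d)}$ (a pointwise finite sum, because each $Q^{-i-1}$ is finite dimensional and therefore supported in only finitely many internal degrees) produces a homotopy from $f$ to $f^{(0)}$, whose components are by construction homogeneous of internal degree zero.

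The hard part will be verifying the bidegree concentration of $H^{-i}(Q^\bullet)$; once that is in place, the remainder is essentially standard projective lifting combined with the decomposition of graded $\Hom$-spaces into homogeneous components.
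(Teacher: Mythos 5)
Your proof is correct, but it takes a genuinely different route from the paper's. The paper reduces to the case where $S_1$ is an indecomposable simple and $S_2$ is a shift of one, observes that the top nonvanishing component $f_{-t}$ of the chain map can be chosen homogeneous of degree zero, and then produces the remaining components by lifting inside the category of graded modules, so that they are homogeneous of degree zero by construction; in effect it builds a degree-zero representative of the homotopy class via the comparison theorem for projective resolutions. You instead start from an arbitrary representative, split it into its internal-degree-homogeneous pieces $f^{(d)}$ (legitimate, since all terms are finitely generated graded modules and the differentials are homogeneous of degree zero), note that the pieces with $d<0$ vanish for generation-degree reasons, and kill the pieces with $d>0$ by explicit homotopies; the key input is that the cohomology of a linear complex of projectives resolving an object of $\mathcal{S}_e$ is concentrated on the diagonal in (cohomological degree, internal degree) bidegree, which you correctly deduce from the decomposition $S_2\cong\bigoplus_j T_j\langle -t_j\rangle[t_j]$. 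Your inductive construction of the homotopy (including the check that $d_Qg=0$, that $\operatorname{im}(g)$ avoids the single internal degree supporting cohomology, and that one may pass to the degree-$d$ component of the lift) is sound, and the pointwise finiteness of the sum over $d$ is justified by finite dimensionality. Your route is longer but more self-contained: in particular it makes explicit why the degree-zero chain map is homotopic to the originally given $f$, a point the paper's two-line argument leaves implicit, and it avoids the reduction to indecomposables altogether.
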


\begin{proof} It suffices to check it for $S_1$ indecomposable in $\mmod A$ and $S_2$ a non-negative shift, say by $t$, of an indecomposable $S$ in $\mmod A$. Let
$$\cdots \to Q^{-2} \to Q^{-1} \to Q^0 \to 0 \to \cdots$$
be a linear graded projective resolution of $S_1$ and
$$\cdots \to R^{-t-2} \to R^{-t-1} \to R^{-t} \to 0 \to  \cdots$$ a linear graded projective resolution of $S$. All $f_i: Q^i \to R^i$ are zero for $i > -t$ and $f_{-t}$ is a retraction, hence can be chosen homogeneous of degree zero. Now, the lifts $f_i$ of $f_{-t}$ for $i \le -t-1$ can all be chosen to be homogeneous of degree zero by working in the category of graded $A$-modules.
\end{proof}

The following can be checked directly using the definition of left $\mathcal{S}_e$-approximations.

\begin{Lemma} \label{Lemma2}Let $C \in D(A)$ with a minimal left $\mathcal{S}_e$-approximation $C \to S$ and consider the induced exact triangle $C[-1] \stackrel{u}{\to} S[-1] \stackrel{v}{\to} C' \to C$. Then ${\rm ker}\,\Hom(u,-)\mid_{\mathcal{S}_e} \cong \Hom(C',-)\mid_{\mathcal{S}_e}$.
\end{Lemma}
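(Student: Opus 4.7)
The plan is to fix an arbitrary $T \in \mathcal{S}_e$, apply the contravariant functor $\Hom_{D(A)}(-, T)$ to the given triangle, and read off the desired isomorphism directly from the resulting long exact Hom-sequence.

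Denote the given minimal left $\mathcal{S}_e$-approximation by $f: C \to S$. The triangle in the statement is the shift by $[-1]$ of the standard triangle $C \xrightarrow{f} S \to C'[1] \to C[1]$ attached to $f$, so rotating once we have equivalently
$$S[-1] \xrightarrow{v} C' \xrightarrow{w} C \xrightarrow{\pm f} S.$$
Applying $\Hom_{D(A)}(-, T)$ then yields the exact sequence
$$\Hom(S, T) \xrightarrow{(\pm f)^*} \Hom(C, T) \xrightarrow{w^*} \Hom(C', T) \xrightarrow{v^*} \Hom(S[-1], T) \xrightarrow{u^*} \Hom(C[-1], T).$$

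Since $T \in \mathcal{S}_e$ and $f$ is a left $\mathcal{S}_e$-approximation of $C$, the map $(\pm f)^*$ is surjective by the defining property of a left approximation. Exactness at $\Hom(C, T)$ then forces $w^* = 0$, so $v^*$ is injective, and exactness at $\Hom(S[-1], T)$ identifies its image with $\ker u^*$. Hence $v^*$ induces an isomorphism $\Hom(C', T) \cong \ker \Hom(u, T)$. Because $v^*$ is induced by the single morphism $v$, this isomorphism is natural in $T \in \mathcal{S}_e$, and the pointwise isomorphisms assemble into the required isomorphism of subfunctors of $\Hom(S[-1], -)\mid_{\mathcal{S}_e}$.

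No genuine obstacle arises: the argument is a direct consequence of the defining surjectivity of a left approximation together with the long exact Hom-sequence of a triangle. Notably, minimality of $f$ plays no role in this lemma—the hypothesis is used only insofar as it ensures that $f$ is an approximation and hence that $(\pm f)^*$ is surjective on Hom-groups into $\mathcal{S}_e$.
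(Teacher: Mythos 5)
Your proof is correct and is precisely the direct verification the paper intends: the paper omits the argument entirely, saying only that the lemma ``can be checked directly using the definition of left $\mathcal{S}_e$-approximations,'' and your rotation of the triangle plus the long exact $\Hom(-,T)$-sequence, using surjectivity of $\Hom(f,T)$ to kill $w^*$, is that direct check. Your closing observation that minimality of $f$ is not needed here, only the approximation property, is also accurate.
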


Let $C_0, \C_1, \ldots, C_r$ be complexes of finitely generated projective $A$-modules
$$C_i: \cdots \to P_i^{j-1} \stackrel{d_i^{j-1}}{\to} P_i^{j} \stackrel{d_i^{j}}{\to} P_i^{j+1} \to \cdots$$
with morphisms $f_i=(f_i^{j}:P_i^{j} \to P_{i+1}^{j}): C_i \to C_{i+1}$ for $0 \le i \le r-1$. Assume that $f_{i+1}^{j}f_i^{j}=0$ for all $0 \le i \le r-2$ and all $j$. Therefore, we have a double complex $DC(C_0, \ldots, C_r)$ of projective modules and we may consider the total complex $T(C_0, \ldots, C_r)$ given by
$$\cdots \to \textstyle{\bigoplus}_{i+j=1}P_i^{j} \to \textstyle{\bigoplus}_{i+j=2}P_i^{j} \to \cdots$$
where the differential $d^l: \textstyle{\bigoplus}_{i+j=l}P_i^{j} \to \textstyle{\bigoplus}_{i+j=l+1}P_i^{j}$ is the usual differential such that its restriction to $P_i^{l-i}$ is given by $(-1)^id_i^{l-i}+f_i^{l-i}$. The following lemma is easy to check and left to the reader.

\begin{Lemma} \label{LemmaTotal}Using the above notation, assume that we have a morphism of complexes $f_r: C_r \to C_{r+1}$ such that $f_{r}^{j}f_{r-1}^{j}=0$ for all $j$. Then $f_r$ induces a morphism of complexes $f: T(C_0, \ldots, C_r) \to C_{r+1}[-r]$ in a canonical way: if $x = (x_0, \ldots, x_r) \in \textstyle{\bigoplus}_{i+t=j}P_i^{t}$, then $f^j(x) = f_r^{-r+j}(x_r)$. Moreover, the mapping cone of $f$ coincides with $T(C_0, \ldots, C_r, C_{r+1})[1]$.
\end{Lemma}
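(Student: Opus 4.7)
The plan is to verify the two assertions directly from the definitions, since both are structural identities that follow from: the hypothesis $f_r^{j}f_{r-1}^{j}=0$, the fact that each $f_i$ commutes with the differentials of $C_i$ and $C_{i+1}$, and the internal relations $d_i^{j+1}d_i^{j}=0$. No homological input is needed beyond bookkeeping, which matches the author's remark that the lemma is left to the reader.

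For the first claim, I would fix $x=(x_0,\ldots,x_r)$ with $x_i \in P_i^{j-i}$ and compare $f^{j+1}\circ d^{j}(x)$ with the shifted differential of $C_{r+1}[-r]$ applied to $f^j(x)$. Unpacking definitions, $f^{j+1}$ selects only the $i=r$ component of $d^j(x)$, which equals $(-1)^r d_r^{j-r}(x_r) + f_{r-1}^{j-r+1}(x_{r-1})$, and then applies $f_r^{j-r+1}$. The second summand is killed by the hypothesis $f_r^{j-r+1}f_{r-1}^{j-r+1}=0$, while on the first summand the chain-map property of $f_r$ turns $f_r^{j-r+1}d_r^{j-r}(x_r)$ into $d_{r+1}^{j-r}f_r^{j-r}(x_r)$. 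With the standard shift convention $d_{C[-r]} = (-1)^{-r}d_{C}$, this matches $d^{j}_{C_{r+1}[-r]}(f^j(x))$ up to the overall sign $(-1)^r$ absorbed in the shift, proving $f$ is a chain map.

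For the second claim, the mapping cone of $f$ in degree $j$ is
$$T(C_0,\ldots,C_r)^{j+1} \oplus (C_{r+1}[-r])^{j} = \Bigl(\bigopls_{i+t=j+1,\,0\le i\le r}P_i^{t}\Bigr) \oplus P_{r+1}^{j-r},$$
and since $(r+1)+(j-r)=j+1$, this is exactly $\bigopls_{i+t=j+1,\,0\le i\le r+1}P_i^{t} = T(C_0,\ldots,C_{r+1})^{j+1}$, i.e.\ the degree $j$ part of $T(C_0,\ldots,C_{r+1})[1]$. To match the differentials, I would inspect the cone differential: on summands with $i\le r$ it is $-d_{T(C_0,\ldots,C_r)}^{j+1}+f^{j+1}$, whose restriction to $P_i^{j+1-i}$ reads $(-1)^{i+1}d_i^{j+1-i}+f_i^{j+1-i}$, and on the summand $P_{r+1}^{j-r}$ it is $d_{C_{r+1}[-r]}^j = (-1)^{-r}d_{r+1}^{j-r}$. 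Both expressions are precisely what the total-complex differential of $T(C_0,\ldots,C_{r+1})$ produces in degree $j+1$, up to the overall sign of a shift by $1$. Hence the cone coincides with $T(C_0,\ldots,C_{r+1})[1]$ as a complex.

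The only subtle point, and the main place where careless writing can cause mistakes, will be reconciling the three independent sign conventions in play: the total-complex sign $(-1)^i$, the shift sign in $C[n]$, and the cone-differential sign. The hypothesis $f_r f_{r-1}=0$ enters exactly once (to kill the cross term when checking $f\circ d=\pm\, d\circ f$); everything else is a bookkeeping identification of summands.
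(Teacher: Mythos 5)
Your proposal is a correct direct verification, and it is exactly the bookkeeping argument the paper intends: the authors give no proof at all (the lemma is explicitly ``left to the reader''), so your write-up simply supplies the omitted check, with the hypothesis $f_r^jf_{r-1}^j=0$ entering precisely once to kill the cross term in $f\circ d$. The only caveat is that ``coincides'' holds up to the standard sign-change isomorphism on summands (with the cone convention $(-d_X,f)$ the $f_r$-component carries the opposite sign from the one produced by $-d_{T(C_0,\ldots,C_{r+1})}$), but you correctly identify this as a matter of reconciling the shift, cone, and total-complex sign conventions rather than a mathematical gap.
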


The following lemma is crucial for computing cohomology of a total complex as above. We need the following notation. For $U$ a direct summand of $V$ in $D(A)$, we denote by $\iota_{U}: U \to V$ the canonical injection and by $\pi_{U}: V \to U$ the canonical projection.

\begin{Lemma} \label{Lemma3}Let $M_0 \in D(A)$ be a graded $A$-module 
and assume that $M_0, S_e$ admit linear projective resolutions. For $i \ge 0$, let $f_i: M_i \to S_i$ be a minimal left $\mathcal{S}_e$-approximation of $M_i$ with an exact triangle $M_{i+1} \to M_i \to S_i \to M_{i+1}[1]$. For each $j \le -1$, we have a short exact sequence
$$0 \to H^{j-1}(S_i) \to H^j(M_{i+1}) \to H^j(M_i) \to 0$$
in cohomology.
\end{Lemma}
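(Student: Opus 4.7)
The asserted short exact sequence is equivalent to the vanishing $H^j(f_i)=0$ for every $j\le -1$, since this is precisely the condition for the long exact sequence from the triangle $M_{i+1}\to M_i\to S_i\to M_{i+1}[1]$ to split into the claimed short exact piece at position $j$. I prove this by induction on $i$. The base case $i=0$ is immediate: $H^j(M_0)=0$ for every $j\le -1$.

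For the inductive step, I would use the linearity hypothesis on $M_0$ and $S_e$ together with Lemma \ref{HomDegreeZero} to represent each $M_l, S_l, f_l$ by complexes of graded projective modules and term-wise homogeneous degree-zero morphisms. The argument then splits into two parts, a tracking of internal gradings and a radical-based vanishing.

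\emph{Internal-degree tracking.} Because $f_l$ is a left $\mathcal{S}_e$-approximation, the exact sequences
\[
0 \to G(M_l)(k) \to G(S_{l-1})(k+1) \to G(M_{l-1})(k+1) \to 0
\]
hold in $\mmod Y(e)$. An induction on $l$ shows that $G(M_l)(k)$ is concentrated in internal degree $-k-l$. Lemma \ref{LemmaMinimal} then forces the summands of the minimal approximation $S_l$ to take the form $S_e\langle -k_\alpha - l\rangle[k_\alpha]$ (one summand per generator of $G(M_l)$ in Ext-degree $k_\alpha$), so $H^j(S_l)$ is concentrated in internal degree $l-j$. Iterating the short exact sequences in cohomology given by the inductive hypothesis, the internal-degree support of $H^j(M_i)$ is contained in $\{1-j, 2-j, \ldots, i-j\}$, with the top piece (in degree $i-j$) equal to the image of the connecting morphism $\delta_{i-1}: S_{i-1}\to M_i[1]$ coming from the triangle $M_i\to M_{i-1}\to S_{i-1}\to M_i[1]$.

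\emph{Vanishing of $H^j(f_i)$.} On the non-top graded pieces of $H^j(M_i)$, the map $H^j(f_i)$ vanishes by internal-degree mismatch: each generator $\alpha\in G(M_i)(-j)$ has internal degree $j-i$, so only the top piece (internal degree $i-j$) of $H^j(M_i)$ can map nontrivially into $H^j(S_e[-j])=S_e$. For the top piece, the image $\tilde\alpha\in G(S_{i-1})(1-j)$ of such an $\alpha$ under the connecting map lies in the kernel of the graded projective cover $G(f_{i-1})$, and hence in $Y(e)_{>0}\cdot G(S_{i-1})$ by Lemma \ref{LemmaMinimal}. Writing $\tilde\alpha=\sum_l y_l\cdot e_l$ with $y_l\in Y(e)_{>0}$ and $e_l$ generators of $G(S_{i-1})$ in Ext-degrees $k_l=(1-j)-\deg y_l\le -j<1-j$, each $H^{j-1}(e_l)$ vanishes because $H^{j-1}(S_e[k_l])=0$ whenever $k_l\ne 1-j$. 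Hence $H^{j-1}(\tilde\alpha)=0$; by the naturality identity $H^{j-1}(\tilde\alpha)=H^j(\alpha)\circ H^{j-1}(\delta_{i-1})$, the map $H^j(\alpha)$ kills the top piece of $H^j(M_i)$. Summing over all generators yields $H^j(f_i)=0$.

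The main technical obstacle is the internal-degree bookkeeping: establishing and maintaining the formula $-k-l$ for the internal degree of $G(M_l)(k)$ through the iterated construction. The linearity hypothesis on the resolutions of $M_0$ and $S_e$ is essential here, as is the minimality of each $f_l$, which through Lemma \ref{LemmaMinimal} provides the radical containment needed to kill the top piece of $H^j(M_i)$.
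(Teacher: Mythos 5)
Your plan is sound and shares the paper's skeleton — reduce the short exact sequence to $H^j(f_i)=0$ for $j\le -1$, induct on $i$, and split $H^j(M_i)$ along the inductive sequence into the image of $H^{j-1}(S_{i-1})$ and a complement surjecting onto $H^j(M_{i-1})$ — but both halves of your vanishing argument run differently from the paper's. For the complement, the paper builds an explicit model of $M_i$ as the total complex $T(M_0,S_0,S_1[1],\ldots,S_{i-1}[i-1])$ (Lemmas \ref{HomDegreeZero} and \ref{LemmaTotal}) and observes that $f_i$ only sees the last column, so it kills any cohomology class supported in the earlier columns; you instead track internal degrees, showing $G(M_l)(k)$ is concentrated in internal degree $-k-l$ so that the degree-zero map $H^j(f_i)$ can only be nonzero on the top graded piece $i-j$ of $H^j(M_i)$. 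For the top piece, the paper argues by contradiction that a nonzero $H^j(f_ig_{i-1})$ would yield an isomorphism between simple summands, violating minimality; you reach the same conclusion by placing $\tilde\alpha=G(g_{i-1}[1])(\alpha)$ in the kernel of the projective cover $G(f_{i-1})$, hence in $Y(e)_{>0}\cdot G(S_{i-1})$ by Lemma \ref{LemmaMinimal}, and killing it term by term on degree grounds — the same minimality input, routed through the $Y(e)$-module structure, and arguably cleaner. Your internal-degree computation checks out (it uses, and should state explicitly, that linearity of the resolution of $S_e$ forces $\Ext^t_A(S_e,S_e\langle d\rangle)=0$ unless $d=-t$). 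The one genuinely deferred step is exactly what occupies the first half of the paper's proof: realizing each $M_l$, $S_l$, $f_l$, $g_l$ by linear complexes of graded projectives and homogeneous degree-zero chain maps, without which "internal degree of $G(M_l)(k)$" and "$H^j(f_i)$ is homogeneous of degree zero" are not yet meaningful; this requires its own induction (the iterated-cone construction via Lemma \ref{LemmaTotal}, including the verification that consecutive connecting maps compose to zero on the nose), so the plan is not complete as written, but the missing piece is exactly what Lemmas \ref{HomDegreeZero} and \ref{LemmaTotal} supply and does not threaten the argument.
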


\begin{proof}
Let $g_i:S_i[-1] \to M_{i+1}, t_{i+1}: M_{i+1} \to M_i$ be such that we have an exact triangle $$S_i[-1] \stackrel{g_i}{\to} M_{i+1} \stackrel{t_{i+1}}{\to} M_i \stackrel{f_i}{\to} S_i$$ and set $h_i:=(f_{i+1}g_i)[i+1]: S_i[i] \to S_{i+1}[i+1]$. We replace the objects $M_0, S_0, S_1, \ldots$ by complexes of finitely generated projective $A$-modules where all differentials are linear. We claim that for $i \ge 1$, $M_i$ is quasi-isomorphic to the total complex $$T(M_0, S_0, S_1[1], S_2[2], \ldots,S_{i-1}[i-1])$$ with morphisms $f_0: M_0 \to S_0, h_0: S_0 \to S_1[1], h_1: S_1[1] \to S_2[2], \ldots, h_{i-2}: S_{i-2}[i-2] \to S_{i-1}[i-1]$ each of which is made from morphisms of projective modules homogeneous of degree zero. We prove this by induction on $i \ge 1$. For convenience, we write $h_{-1}=f_0$.  
For $i=1$, the claim follows from Lemma \ref{HomDegreeZero} and the definition of the mapping cone of $f_0$. Assume that the claim holds for some $i \ge 1$. It follows from Lemma \ref{Lemma2} (by taking $C = M_{i-1}, C' = M_i$ and $S = S_{i-1}$) that $f_i: M_i \to S_i$ is given by the data of a morphism $u_i:S_{i-1}[-1] \to S_{i}$ whose composition with $f_{i-1}[-1]$ is zero (and then, $u_i = f_ig_{i-1}$). Let $u_i$ be such a morphism. By Lemma \ref{HomDegreeZero}, $u_i$ is given by morphisms of degree zero between projective modules. Set $h_{i-1} = u_i[i] = f_{i}g_{i-1}[i]$. We see that $$h_{i-1}h_{i-2} = (f_{i}g_{i-1})[i](f_{i-1}g_{i-2})[i-1] = f_{i}[i](g_{i-1}[i]f_{i-1}[i-1])g_{i-2}[i-1]=0.$$ Since the
$h_{i-2}^j, h_{i-1}^j$ are all homogeneous of degree zero and since the differentials in $S_{i-2}, S_i$ are all radical, we see that the morphisms $h_{i-2}^jh_{i-1}^j$ are all zero. Indeed, if a morphism of degree zero between finitely generated projective modules generated in the same degree lies in the radical, then it has to be the zero morphism. It follows from this and Lemma \ref{LemmaTotal} that $M_{i+1}$, which is the shift by $-1$ of the mapping cone of $f_i$, is of the required form.

To prove the statement on cohomology, it suffices to prove that $H^j(f_i)=0$ for all $i \ge 0$ and all $j \le -1$. We proceed by induction on $i$. The statement is clear for $i=0$ since $H^t(M_0)=0$ for all $t \le -1$. Let $i$ be positive. We need to prove that $H^j(f_i)=0$ for all $j \le -1$. We first claim that $H^j(f_ig_{i-1})=0$ for all $j \le -1$. Assume otherwise, that is, there is $j \le -1$ with $H^j(f_ig_{i-1})\ne 0$. Let $A$ be a maximal direct summand of $S_{i-1}[-1]$ in add$(S_e[j])$ and $B$ be a maximal direct summand of $S_{i}$ in add$(S_e[j])$. We have a morphism $\pi_Bf_ig_{i-1}\iota_A: A \to B$ with $H^j(\pi_Bf_ig_{i-1}\iota_A) \ne 0$, or equivalently, $H^0((\pi_Bf_ig_{i-1}\iota_A)[-j]) \ne 0$. Since $A[-j], B[-j]$ are semi-simple, it means that there are indecomposable direct summands $A'[-j]$ and $B'[-j]$ of $A[-j]$ and $B[-j]$, respectively, such that $\pi_{B'[-j]}(\pi_Bf_ig_{i-1}\iota_A)[-j]\iota_{A'[-j]}$ is an isomorphism. This implies that the restriction of the morphism $g_{i-1}: S_{i-1} \to M_i[1]$ to $A'$ is a section, which means that the co-restriction of $f_i$ to $A'$ is zero. This contradicts that $f_i$ is minimal. This proves our claim. Now, by induction, we have a short exact sequence
$$0 \to H^j(S_{i-1}[-1]) \stackrel{H^j(g_{i-1})}{\to} H^j(M_i) \stackrel{H^j(t_i)}{\to} H^j(M_{i-1}) \to 0.$$
Since $H^j(f_ig_{i-1}) = H^j(f_i)H^j(g_{i-1})=0$, there is a morphism $w: H^j(M_{i-1}) \to H^j(S_i)$ such that $H^j(f_i)=wH^j(t_i)$. However, $w$ is clearly zero as any element in degree $j$ of $T(M_0, S_0, S_1[1], S_2[2], \ldots,S_{i-2}[i-2])$, seen canonically as an element in degree $j$ of $T(M_0, S_0, S_1[1], S_2[2], \ldots,S_{i-1}[i-1])$, vanishes after applying $f_i$ to it.

\end{proof}

\begin{Theo} \label{Theo1}
Let $A$ be positively graded finite dimensional and assume that $S_e$ has a linear projective resolution. If the global dimensions of $A$ and $\Gamma$ are finite, then the global dimension of $Y(e)$ is finite.
\end{Theo}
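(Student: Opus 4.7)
My plan is to reduce to showing that each simple graded left $Y(e)$-module has finite projective dimension and then to build such resolutions via the iteration from Proposition \ref{Prop0}. Since $Y(e)$ is positively graded with semisimple zero-part and graded semiperfect by \cite{Dasc}, finite global dimension is equivalent to each simple graded left $Y(e)$-module having finite projective dimension. Up to internal shift these simples are the modules $T_{e_j}$ indexed by primitive idempotent summands $e_j$ of $e$; and taking $M_0 := e_j A$ as an object of $D(A)$ concentrated in cohomological degree $0$ gives $G(M_0) \cong T_{e_j}$, since $e_j A$ is projective and $\Hom_A(e_j A, S_e) \cong S_{e_j}$ while $\Ext^{\ge 1}_A(e_j A, S_e) = 0$.

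I would then iterate minimal left $\mathcal{S}_e$-approximations $f_i: M_i \to S_i$ with cocones $M_{i+1}$, so that by Lemma \ref{LemmaMinimal} and the approximation property the short exact sequences $0 \to G(M_{i+1})\langle -1\rangle \to G(S_i) \to G(M_i) \to 0$ assemble into a minimal graded projective resolution of $T_{e_j}$. It therefore suffices to prove $G(M_r) = 0$ for some finite $r$. Since $e_j A$ has a trivially linear resolution and $S_e$ has a linear projective resolution by hypothesis, Lemma \ref{Lemma3} represents $M_i$ by the total complex $T(e_j A, S_0, S_1[1], \ldots, S_{i-1}[i-1])$ built from linear projective complexes via degree-zero glue morphisms $h_m$. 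Moreover, because $F$ annihilates $\mathcal{S}_e$, we have $F(M_i) \cong F(e_j A)$ for every $i$, and the latter has projective dimension at most $d := {\rm gl.dim}\, \Gamma$ in $\mmod \Gamma$.

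The decisive step is to show $G(M_r) = 0$, i.e.\ $\Hom_{D(A)}(M_r, S_e[n]) = 0$ for all $n$, once $r$ exceeds an explicit bound. I would combine (i) the total-complex description, whose terms have generators in internal degrees controlled by the linear resolutions of the $S_m$; (ii) Lemma \ref{LemmaSplit}, applied iteratively to the ${\rm add}((1-e)A)$-summands of the terms of $M_i$ --- since $F(M_i) \cong F(e_j A)$ has projective dimension at most $d$, past cohomological depth $d$ the non-$\T$ part of the complex stabilizes and the remaining tail lies in ${\rm add}(eA)$ with cohomologies in the Serre subcategory $\T$; and (iii) the finite dimensionality of $A$, which bounds the possible internal degrees and hence the cohomological range in which nonzero morphisms into $S_e[n]$ can be supported. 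The chain-map conditions coming from the linear differentials inside each $S_m$ and from the glue morphisms $h_m$ then eliminate the remaining potential morphisms, giving $G(M_r) = 0$ for $r$ of order $d + L$, where $L$ is the length of the linear projective resolution of $S_e$. The main obstacle I expect is step (iii): carefully tracking, via the chain-map and homotopy constraints, how the glue maps $h_m$ encode the syzygy relations of $T_{e_j}$ in $\mmod Y(e)$ and annihilate all residual morphisms into $S_e[n]$ once $r$ passes the predicted threshold.
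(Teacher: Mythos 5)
Your overall architecture is the same as the paper's: reduce to the simple graded left $Y(e)$-modules (the paper starts from the syzygy $M_0=\Omega T$ of a simple $T\in{\rm add}(S_e)$, whose image under $G$ is, up to shift, the first syzygy of the corresponding simple $Y(e)$-module, while you start one step earlier at $e_jA$; the two iterations agree from the second term on), build the minimal graded projective resolution by iterating minimal left $\mathcal{S}_e$-approximations via Lemma \ref{LemmaMinimal}, and reduce the theorem to the vanishing $G(M_r)=0$ for some finite $r$. Up to that point the proposal is sound and matches the paper.

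The gap is exactly the step you flag as your ``main obstacle'': the proof that $G(M_r)=0$ eventually is never given, and the mechanism you sketch for it is not the one that works. The finite dimensionality of $A$ does not bound ``the cohomological range in which nonzero morphisms into $S_e[n]$ can be supported'': the complexes $M_i$ get wider as $i$ grows, so a priori ${\rm add}(eA)$-summands can keep appearing in ever lower cohomological degrees, and no bound of the form $r\approx d+L$ comes out of the construction (the paper produces no explicit bound at all). What actually closes the argument is a dimension-counting induction on the shift $j$, based on the exact sequences $0\to H^{j-1}(S_i)\to H^{j}(M_{i+1})\to H^{j}(M_i)\to 0$ of Lemma \ref{Lemma3} --- i.e.\ on the vanishing $H^{<0}(f_i)=0$ for the minimal approximations, which your proposal never uses. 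Concretely: first $\Hom_{D(A)}(M_i,S_e)=0$ for $i\gg0$ because the chain of epimorphisms $H^0(T_{i+1})\to H^0(T_i)$ onto quotients of the fixed finite-dimensional module $H^0(M_0)$ must stabilize; then, for each fixed $j$, as long as $S_i$ retains a summand in ${\rm add}(S_e[j])$ the dimension of the relevant cohomology of $M_{i+1}$ strictly exceeds that of $M_i$, while these dimensions are bounded because the terms of the complexes in the degrees above the one under consideration have already stabilized and are finite dimensional; hence the lowest $j$ with $\Hom_{D(A)}(M_i,S_e[j])\neq0$ is pushed past any prescribed threshold. Only at that point does ${\rm gl.dim}\,\Gamma$ enter: with $s={\rm pd}_\Gamma F(M_0)$ and all terms of $M_t$ in degrees $-(s+r)\le p\le 0$ lying in ${\rm add}((1-e)A)$, the kernel of the appropriate differential maps to its target by a section after applying $F$, and Lemma \ref{LemmaSplit}, the radicality of the differentials, and $H^{<0}(M_t)\in\T$ force the entire tail of the complex to be zero --- not, as you suggest, to ``lie in ${\rm add}(eA)$'': a nonzero tail in ${\rm add}(eA)$ is precisely the statement $G(M_t)\neq0$ that you must refute. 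Without the cohomology exact sequences and the dimension count, your argument does not terminate.
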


\begin{proof}
Let $T$ be a simple $A$-module in add$(S_e)$ with syzygy $M_0$. For $i \ge 0$, let $S_i \in \mathcal{S}_e$ with a minimal left $\mathcal{S}_e$-approximation $f_i: M_i \to S_i$ with mapping cone $M_{i+1}[1]$. Applying $G$ to the exact triangle
$$M_{i+1} \to M_i \to S_i \to M_{i+1}[1],$$
we get a short exact sequence
$$0 \to G(M_{i+1}[1]) \to G(S_i) \to G(M_i) \to 0$$
in $\mmod Y(e)$ where the rightmost morphism is a projective cover, since $f_i$ is minimal. Observe also that all $F(M_i)$ are isomorphic to $F(M_0)$ and the $M_i$ are all concentrated in non-positive degrees. Also, it follows from the construction of the $M_i$ that for $j \le -1$, we have $H^j(M_i) \in \T$. Let $s$ be the projective dimension of the $\Gamma$-module $F(M_0)$. We need to prove that $G(M_i)=0$ for some $i \ge 0$. Equivalently, we need to prove that if $i$ is large enough and $M_i$ is given by a complex of projective modules with radical maps, then no term has a non-zero direct summand in add$(eA)$.

Observe that for $i \ge 1$, using the notations of the above lemma, we have an exact triangle
$$M_i \to M_0 \to T_i \to M_i[1]$$
where $T_i:=T(S_0, S_1[1], \ldots, S_{i-1}[i-1])$
and we have an epimorphism $H^0(M_0) \to H^0(T_i)$, as $H^1(M_i)=0$. Observe also that the latter epimorphism factors through $H^0(T_{i+1}) \to H^0(T_i)$. Therefore, we have a chain
$$\cdots \to H^0(T_3) \to H^0(T_2) \to H^0(T_1)$$ of epimorphisms and since $H^0(M_0) \cong M_0$ is finite dimensional,
we see that there exists $i_0 \ge 1$ with $H^0(T_{i_0}) \cong H^0(T_i)$ whenever $i \ge i_0$. This means that $S_i[i]$ has no non-zero direct summand in add$(S_e[i])$ for $i \ge i_0$. Equivalently, for $i \ge i_0$, we have $\Hom_{D(A)}(M_i, S_e)= 0$. Now, let $i$ be a non-negative integer and $j$ be minimal such that $\Hom_{D(A)}(M_i, S_e[j])\ne 0$. We claim that there is $r > 0$ such that $\Hom_{D(A)}(M_{i+r}, S_e[j']) = 0$ for $j' \le j$. The case where $j=0$ has just been settled, so we may assume that $j \ge 1$ (so $i \ge i_0$). In the minimal left $\mathcal{S}_e$-approximation $f_i: M_i \to S_i$ of $M_i$, there is a non-zero module $Z$ in add$(S_e)$ such that $Z[j]$ is a direct summand of $S_i$. Observe that we have a short exact sequence
$$0 \to H^j(S_i) \to H^{j+1}(M_{i+1}) \to H^{j+1}(M_i) \to 0$$
for $j \le -1$. Indeed, this follows from Lemma \ref{Lemma3} for $j \le -2$ and from the fact that $H^0(S_i)=0$, by what we have just proven, for $j=-1$.
Therefore, the dimension of $H^{j+1}(M_{i+1})$ is larger than that of $H^{j+1}(M_{i})$. Now, $M_{i+1}$ has the property that $\Hom_{D(A)}(M_{i+1}, S_e[j']) = 0$ if $j' < j$. Therefore, if we continue in this way, we see that $$\cdots \ge {\rm dim}_k H^{j+1}(M_{i+2}) \ge {\rm dim}_k H^{j+1}(M_{i+1}) > {\rm dim}_k H^{j+1}(M_{i})$$
with $\Hom_{D(A)}(M_{i+i'}, S_e[j']) = 0$ whenever $i' \ge 0$ and $j' < j$.
Since the $j+1$-th and $j+2$-th terms of $M_i, M_{i+1}, \ldots$ are all the same and finite dimensional, there is a bound on the dimensions of $H^{j+1}M_{i}, H^{j+1}M_{i+1}, \ldots$. Therefore, there is $r>0$ such that $H^{j+1}(M_{i+r}) \cong H^{j+1}(M_{i+r-1})$ which, by Lemma \ref{Lemma3}, implies that $H^j(S_{i+r})=0$ or, equivalently, that $\Hom_{D(A)}(M_{i+r}, S_e[j]) = 0$. The claim follows from this.

Assume to the contrary that $G(M_i)\ne 0$ for all $i \ge 0$. It follows from the claim and Lemma \ref{Lemma3} that there are $t,r > 0$ such that $\Hom_{D(A)}(M_t, S_e[j]) = 0$ for $0 \le j \le s+r$ and $H^{-s-r+1}(M_t)$ is non-zero. Set $q = -s-r+1$. Assume that $M_t$ is given by a complex
$$\cdots \to P^{q-1} \stackrel{d^{q-1}}{\to} P^q \stackrel{d^{q}}{\to} P^{q+1} \to \cdots \to P^0 \to 0$$
of projective modules where the differentials are radical.
Let $h:K \to P^q$ be the kernel of $d^q$. Observe that $\Hom_A(P^{q-1}, S_e) = 0$. Let $K'$ be a submodule of $K$ with $K/K' \cong H^q(M_t) \ne 0$.
Observe that we have the beginning
$$ F(P^q) \to F(P^{q+1}) \to \cdots \to F(P^0) \to 0$$ of a projective resolution of $F(M_0)$ and the kernel of the last morphism is $F(h)$. Since $s < s+r$ is the projective dimension of $F(M_0)$, we see that $F(h)$ is a section.
By Lemma \ref{LemmaSplit}, $K = K_1 \oplus K_2$ where $K_1$ is projective in $\mmod A$ and $K_2 \in \T$. Moreover, the restriction $K_1 \to P^q$ is a section. Since we have assumed that all maps in $M_t$ are radical, we get $K_1 = 0$. Thus, $K=K_2$ and $K' = 0$ (hence, $d^{q-1}=0$) since $\Hom_A(P^{q-1}, S_e) = 0$. Since $H^{q-1}(M_t) \in \T$, this leads to $P^{q-1}=0$. If all $P^i$ for $i \le q-1$ are zero, we get a contradiction. Otherwise, there is some $p \le q-2$ with $P^p \ne 0$ and $\Hom_A(P^p, S_e) \ne 0$. Then, considering the minimal $\mathcal{S}_e$-approximation $M_t \to S_t$, there is a non-zero direct summand of $S_e[p]$ in $S_t$. We get a contradiction to Lemma \ref{Lemma3}, as $H^p(M_t) \to H^p(S_t)$ is non-zero.
\end{proof}

The following corollary is a direct consequence of propositions \ref{Prop0} and \ref{Prop1} and Theorem \ref{Theo1}.

\begin{Cor}
Assume that $A$ is finite dimensional positively graded and $S_e$ has a linear projective resolution. Then any two of the following conditions imply the third.
\begin{enumerate}[$(1)$]
    \item The global dimension of $A$ is finite.
\item The global dimension of $\Gamma$ is finite.
\item The algebra $Y(e)$ is finite dimensional and has finite global dimension. \end{enumerate}
\end{Cor}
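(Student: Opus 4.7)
The plan is to split the three-way equivalence into its three component implications $(1)\wedge(2)\Rightarrow(3)$, $(1)\wedge(3)\Rightarrow(2)$, and $(2)\wedge(3)\Rightarrow(1)$, and then to verify each one by appealing to a result already established earlier in the paper. The corollary itself is essentially a bookkeeping step: I would not expect to carry out any new computation, since the genuine content lives in Theorem \ref{Theo1} and Propositions \ref{Prop0} and \ref{Prop1}.

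For $(1)\wedge(2)\Rightarrow(3)$, Theorem \ref{Theo1} applies verbatim under the standing hypothesis that $A$ is finite dimensional positively graded with $S_e$ admitting a linear projective resolution: finiteness of ${\rm gl.dim}\,A$ and ${\rm gl.dim}\,\Gamma$ forces ${\rm gl.dim}\,Y(e)$ to be finite, and $Y(e)$ is automatically finite dimensional because ${\rm gl.dim}\,A<\infty$ together with finite dimensionality of $A$ bounds the range of degrees in which $\Ext^i_A(S_e,S_e)$ can be nonzero and forces each such graded piece to be finite dimensional. This is the only direction in which the linear resolution hypothesis does real work, and all the technical effort has been absorbed into the proof of Theorem \ref{Theo1}.

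For $(1)\wedge(3)\Rightarrow(2)$, I would invoke Proposition \ref{Prop0}, whose hypotheses are simply that $A$ and $Y(e)$ have finite global dimension; condition $(3)$ supplies the latter (indeed more than one needs, since finite dimensionality of $Y(e)$ is not used for this direction), and condition $(1)$ supplies the former. For $(2)\wedge(3)\Rightarrow(1)$, I would apply Proposition \ref{Prop1}: it requires $\Gamma$ to have finite global dimension (given by $(2)$) together with $Y(e)$ Artinian, and finite dimensionality of $Y(e)$ in $(3)$ supplies the Artinian hypothesis at once.

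No serious obstacle remains beyond the work already carried out: the hard direction is $(1)\wedge(2)\Rightarrow(3)$, which is exactly Theorem \ref{Theo1}, while the other two implications hold in the broader semiperfect Noetherian setting of Section 4 without any appeal to the graded or linearly resolved hypothesis. The proof of the corollary is thus a one-line assembly of these three previously established results.
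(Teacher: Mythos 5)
Your proposal is correct and matches the paper exactly: the paper derives this corollary as a direct consequence of Propositions \ref{Prop0} and \ref{Prop1} and Theorem \ref{Theo1}, decomposed into the same three implications you list. Your added remark that finite dimensionality of $Y(e)$ in the direction $(1)\wedge(2)\Rightarrow(3)$ follows from ${\rm gl.dim}\,A<\infty$ is consistent with the paper's earlier observation that $Y(e)$ is Artinian whenever $A$ has finite global dimension.
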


Now, the next result is a particular case of the above corollary.

\begin{Cor}
Let $A$ be a finite dimensional Koszul algebra. Then any two of the following conditions imply the third.
\begin{enumerate}[$(1)$]
    \item The global dimension of $A$ is finite.
\item The global dimension of $\Gamma$ is finite.
\item The algebra $Y(e)$ is finite dimensional and has finite global dimension. \end{enumerate}
\end{Cor}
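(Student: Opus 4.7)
The plan is to deduce this corollary directly from the preceding one, which establishes the three-out-of-two principle for finite dimensional positively graded algebras $A$ under the hypothesis that $S_e$ admits a linear projective resolution. Thus the only thing to verify is that the Koszul hypothesis on $A$ forces $S_e$ to have a linear projective resolution.

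By definition, $A$ is Koszul means that $A$ is positively graded, $A(0)$ is semi-simple (a finite product of copies of $k$), and $A(0)$ admits a linear graded projective resolution as a right $A$-module. As noted in the discussion preceding Lemma \ref{HomDegreeZero}, the module $S_e$ is a direct summand of $A(0)$ as a graded $A$-module. Therefore, taking the corresponding direct summand of the linear projective resolution of $A(0)$ yields a linear projective resolution of $S_e$. (Concretely, the idempotent $e$ acts on the resolution of $A(0)$ and cuts out a subcomplex whose $i$-th term is generated in degree $i$, still consisting of finitely generated graded projective modules.)

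With this observation in hand, the hypotheses of the previous corollary are satisfied, so any two of conditions $(1)$, $(2)$, $(3)$ imply the third. This completes the proof. There is no real obstacle here; the entire content of the statement is the reduction of Koszulity to the weaker condition that $S_e$ itself has a linear projective resolution, which is immediate from the definition of Koszul algebras together with the fact that $S_e$ is a graded direct summand of $A(0)$.
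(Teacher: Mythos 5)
Your proposal is correct and matches the paper's route exactly: the paper derives this corollary as a particular case of the preceding one, relying (as stated in the introduction) on the fact that every semi-simple module over a Koszul algebra --- in particular $S_e$, being a graded direct summand of $A(0)$ --- inherits a linear projective resolution as a direct summand of the linear resolution of $A(0)$. Your filling in of that one small detail is accurate, so there is nothing further to add.
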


\section{Reduction for computing the Cartan determinant}

Assume that $A$ is a basic artin algebra. Thus, if $R$ denotes the center of $A$, then $R$ is an Artinian ring and $A$ is of finite length as an $R$-module. A long-standing conjecture in representation theory of artin algebras is the Cartan determinant conjecture. It states that if ${\rm gl.dim}\, A < \infty$, then the determinant of a Cartan matrix $C_A$ of $A$ is one. The reader is referred to \cite{Fuller} for a survey on this conjecture.

\medskip

Let $1 = e_1 + \cdots + e_n$ be a decomposition of $1$ into pairwise orthogonal primitive idempotents. Thus, if $P_i:=e_iA$ for $1 \le i \le n$, then $P_1, \ldots, P_n$ are the pairwise non-isomorphic indecomposable projective $A$-modules in $\mmod A$. Let $C_A$ be the Cartan matrix of $A$ associated to this decomposition with that order. The $(i, j)$ entry of $C_A$ is the length over $e_iAe_i$ of $e_jAe_i$. Alternatively, it is the length of $\Hom_A(P_i,P_j)$ over ${\rm End}_A(P_i)$. Of course, the matrix $C_A$ does depend on the chosen order $e_1, \ldots, e_n$ of $\{e_1, \ldots, e_n\}$, however, any Cartan matrix of $A$ is obtained from $C_A$ by simultaneous permutations of its rows and columns. Therefore, the determinant ${\rm cd}(A)$ of any Cartan matrix of $A$ is always the same and hence is a numerical invariant of $A$.  A well known result due to Eilenberg says that $C_A$ is invertible over $\Z$ whenever ${\rm gl.dim}\, A < \infty$; see \cite{Fuller}. Therefore, in this case, ${\rm det} C_A = {\rm cd}(A) = \pm 1$. So far, there is no known example of an artin algebra $A$ having finite global dimension with ${\rm cd}(A) = -1$.

\medskip

\begin{Prop}
Assume that ${\rm gl.dim}\, A < \infty$ and let $e$ be any idempotent of $A$ and put $\Gamma = (1-e)A(1-e)$. Assume further that ${\rm gl.dim}\, Y(e) < \infty$. Then ${\rm cd}(A) = {\rm cd}(\Gamma)$ where $\Gamma$ has finite global dimension. In particular, if the Cartan determinant conjecture holds for the smaller algebra $\Gamma$, then it holds for $A$.
\end{Prop}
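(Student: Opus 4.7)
The plan is to put the Cartan matrix of $A$ in block form adapted to the decomposition $1 = e + (1-e)$, and then show that the resulting Schur complement has determinant $1$ by comparing it with the graded Cartan matrix of $Y(e)$. First, Proposition \ref{Prop0} gives that $\Gamma$ has finite global dimension, so $C_\Gamma$ is invertible over $\Z$ by Eilenberg's theorem. After reordering the primitive idempotents so that $e = e_1 + \cdots + e_r$ and $1-e = e_{r+1} + \cdots + e_n$, one can write
$$C_A = \begin{pmatrix} X & Y \\ Z & W \end{pmatrix},$$
and the identity $e_i\Gamma e_j = e_iAe_j$ for $i,j > r$ gives $W = C_\Gamma$. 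The Schur complement formula then yields $\det C_A = \det C_\Gamma \cdot \det(X - YW^{-1}Z)$, and thus it suffices to show $\det(X - YW^{-1}Z) = 1$.

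I would next identify this Schur complement with an invariant attached to $Y(e)$. Since $A$ has finite global dimension, the entries of $C_A^{-1}$ are given by the Euler form, namely $(C_A^{-1})_{ij} = \sum_{k \ge 0}(-1)^k \ell_{\End_A(S_i)}\Ext^k_A(S_j, S_i)$. By block inversion, the upper-left $r \times r$ block of $C_A^{-1}$ equals $(X - YW^{-1}Z)^{-1}$. For $i,j \le r$ the groups $\Ext^k_A(S_j, S_i)$ are precisely the homogeneous pieces of the graded ring $Y(e)$, so this upper-left block coincides with the evaluation at $q = -1$ of the graded Cartan matrix
$$P_{Y(e)}(q) := \Bigl(\, \sum_{k \ge 0} q^k\, \ell_{\End_A(S_i)}\Ext^k_A(S_j, S_i) \,\Bigr)_{1\le i,j \le r} \in M_r(\Z[q])$$
of the positively graded artin algebra $Y(e)$.

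The crucial step is to prove $\det P_{Y(e)}(q) = 1$ in $\Z[q]$. Since $Y(e)$ is positively graded, artinian, and has finite global dimension, each graded simple $T_i$ of $Y(e)$ (concentrated in degree $0$) admits a finite minimal graded projective resolution whose terms are direct sums of shifts $Y(e)e_j\langle -s\rangle$ with $s \ge 0$, because higher syzygies of $T_i$ necessarily live in strictly positive degrees by minimality combined with the positivity of the grading. Passing to the graded Grothendieck group of finitely generated graded $Y(e)$-modules then shows that $P_{Y(e)}(q)^{-1}$ also has entries in $\Z[q]$, so $P_{Y(e)}(q) \in \GL_r(\Z[q])$ and $\det P_{Y(e)}(q) \in \Z[q]^\times = \{\pm 1\}$. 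But $P_{Y(e)}(0)$ is the Cartan matrix of the semisimple ring $Y(e)^0 = \End_A(S_e)$, which is the identity; hence $\det P_{Y(e)}(q) = 1$. Specialising at $q = -1$ gives that the upper-left block of $C_A^{-1}$ has determinant $1$, so $\det(X - YW^{-1}Z) = 1$, and therefore $\det C_A = \det C_\Gamma$, as desired.

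The main technical obstacle I anticipate is keeping the length and multiplicity conventions over the (generally non-commutative, non-field) endomorphism rings $\End_A(S_i)$ consistent, so that the identification of the upper-left block of $C_A^{-1}$ with $P_{Y(e)}(-1)$ holds over $\Z$ and not merely over $\mathbb{Q}$. Once this bookkeeping is settled, the positivity of the grading together with the semisimplicity of $Y(e)^0$ immediately pin the graded Cartan determinant to $1$, from which the equality of the Cartan determinants of $A$ and $\Gamma$ is forced.
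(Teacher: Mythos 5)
Your proof is correct and follows essentially the same route as the paper: decompose the Cartan matrix into blocks adapted to $e$ and $1-e$, identify the complementary block of $C_A^{-1}$ with the graded Cartan matrix of $Y(e)$ evaluated at $-1$, and use that this graded Cartan determinant equals $1$. The only difference is that where the paper cites Wilson's theorem for $\det C_{Y(e)}(t)=1$ and a generalized Cramer's rule for the minor identity, you reprove both inline (via positivity of the grading together with semisimplicity of $Y(e)(0)$, and via the Schur complement, respectively), which makes the argument self-contained.
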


\begin{proof}
We may assume that $e = e_1 + \cdots + e_r$ for some $1 \le r \le n$. Consider the Euler matrix $E := (C_A^{-1})^t$. It is well known that the $(i,j)$-entry of $E$ is
$$\sum_{t=0}^\infty(-1)^tc_i(\Ext_A^t(S_j, S_i))$$
where $S_t = e_tA/e_t{\rm rad}A$ for $1 \le t \le n$ and $c_i(\Ext_A^t(S_j, S_i))$ is the length of $\Ext_A^t(S_j, S_i)$ as an ${\rm End}_A(S_i)$-module, and it just counts the multiplicity of $e_iA$ in the $t$-term of a minimal projective resolution of $S_j$.
Let $X$ be the $r \times r$ submatrix of $C_A$ generated by the $r$ first rows and columns, and let $W$ be the $(n-r)\times(n-r)$ submatrix of $C_A^{-1}$ generated by the $n-r$ last rows and columns. A well-known formula in linear algebra, see \cite{GAE}, gives
$${\rm det} C_A = \frac{{\rm det} X}{{\rm det}W}$$
whenever ${\rm det}W$ is nonzero. Since $Y(e)$ has finite global dimension, a result of Wilson \cite{Wilson} states that the graded Cartan determinant ${\rm det}C_{Y(e)}(t)$ of $Y(e)$ is one. Putting $t  = -1$, we get ${\rm det}C_{Y(e)}(-1) = {\rm det}W^t=1$, so ${\rm det} W=1$. As, clearly, ${\rm det}X$ is the Cartan determinant ${\rm cd}(\Gamma)$ of $\Gamma$, we get
${\rm cd}(A) = {\rm cd}(\Gamma).$
\end{proof}

Note that we may take $e = 1$ in the above proposition. In this case, $\Gamma = 0$ and by convention, ${\rm cd}(\Gamma)=1$. Thus, if $Y(1)$ has finite global dimension, then the Cartan determinant conjecture is verified for $A$. Regarding the assumption of this proposition, it is generally easier to check the finiteness of the global dimension of $\Gamma$ rather than that of $Y(e)$. The following result holds provided Conjecture \ref{conj} holds.

\begin{Prop}[Assuming Conj.~\ref{conj}]
Assume that ${\rm gl.dim}\, A < \infty$ and let $e$ be an acyclic idempotent of $A$ and put $\Gamma = (1-e)A(1-e)$. Assume further that ${\rm gl.dim}\, \Gamma < \infty$. Then ${\rm cd}(A) = {\rm cd}(\Gamma)$. In particular, if the Cartan determinant conjecture holds for the smaller algebra $\Gamma$, then it holds for $A$.
\end{Prop}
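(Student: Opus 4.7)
The plan is essentially to chain together Conjecture~\ref{conj} with the preceding proposition, so there is very little to do beyond noting which hypothesis feeds which result. First I would observe that the hypotheses of this proposition are exactly the hypotheses of Conjecture~\ref{conj}: the idempotent $e$ is acyclic, and both $A$ and $\Gamma$ have finite global dimension. Granting Conjecture~\ref{conj}, we immediately conclude that $Y(e)$ has finite global dimension.

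Once $\mathrm{gl.dim}\, Y(e) < \infty$ is in hand, the hypotheses of the previous proposition are satisfied (we have $\mathrm{gl.dim}\, A < \infty$ by assumption, and now also $\mathrm{gl.dim}\, Y(e) < \infty$), so that proposition applies directly and yields $\mathrm{cd}(A) = \mathrm{cd}(\Gamma)$ together with the finiteness of $\mathrm{gl.dim}\,\Gamma$ (which we already assumed here). The last sentence of the statement is then a formal consequence: if $\mathrm{cd}(\Gamma) = 1$, then $\mathrm{cd}(A) = 1$ as well.

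There is no real obstacle here beyond having Conjecture~\ref{conj} available; the entire content of this proposition lies in the conjecture, with the Euler/linear-algebra computation $\det C_A = \det X / \det W$ and Wilson's theorem on graded Cartan determinants already handled in the previous proposition. Thus the proof will be a single short paragraph invoking Conjecture~\ref{conj} followed by the previous proposition.
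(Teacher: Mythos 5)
Your proposal is correct and matches the paper's intent exactly: the paper states this proposition without a written proof precisely because it is the immediate concatenation of Conjecture~\ref{conj} (yielding ${\rm gl.dim}\,Y(e)<\infty$ from the acyclicity of $e$ and the finiteness of ${\rm gl.dim}\,A$ and ${\rm gl.dim}\,\Gamma$) with the preceding Cartan-determinant proposition. Nothing further is needed.
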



\begin{thebibliography}{99}


\bibitem{AndFul} F.W. Anderson and K.R. Fuller, {\em Rings and categories of modules},
Second edition. Graduate Texts in Mathematics, 13. Springer-Verlag, New York, 1992. x+376 pp.

\bibitem{BP} S. Bazzoni and A. Pavarin, {\em Recollements from partial tilting complexes}, J. Algebra {\bf 388} (2013), 338--363.

\bibitem{Dasc} S. Dascalescu, {\em Graded semiperfect rings},
Bull. Math. Soc. Sci. Math. Roumanie {\bf 36} (1992), no. 3-4, 247--254.


\bibitem{Fuller} K.R. Fuller, {\em The Cartan determinant and global dimension of Artinian rings}, Azumaya algebras, actions, and modules (Bloomington, IN, 1990), 51--72, Contemp. Math., 124, Amer. Math. Soc., Providence, RI, 1992.

\bibitem{GAE} Z. Gong, M. Aldeen and L. Elsner, {\em A note on the generalized Cramer's rule}, Linear Algebra and its Applications {\bf 340} (2002), 253--254.

\bibitem{Happel} D. Happel, {\em Auslander-Reiten triangles in derived categories of finite-dimensional algebras},
Proc. Amer. Math. Soc. {\bf 112} (1991), no. 3, 641--648.


\bibitem{IP} C. Ingalls and C. Paquette, {\em Homological dimensions for co-rank one idempotent subalgebras}, to appear in Trans. Amer. Math. Soc., 24 pages, 2015, arXiv:1405.5429.

\bibitem{ILP} K. Igusa, S. Liu and C. Paquette, {\em A proof of the strong no loop conjecture},
Adv. Math. {\bf 228} (2011), no. 5, 2731--2742.

\bibitem{KrSao} H. Krause and M. Saorín, {\em On minimal approximations of modules}, Trends in the representation theory of finite-dimensional algebras, 227--236, Contemp. Math., 229, Amer. Math. Soc., Providence, RI, 1998.

\bibitem{Sando} F.L. Sandomierski, {\em Modules over the endomorphism ring of a finitely generated projective module},
Proc. Amer. Math. Soc. {\bf 31} (1972) 27--31.

\bibitem{Serre} J.-P. Serre, {\em Sur la dimension homologique des anneaux et des modules noethÃ©riens}, Proceedings of the international symposium on algebraic number theory, Tokyo $\&$ Nikko, 1955, pp. 175--189. Science Council of Japan, Tokyo, 1956.

\bibitem{Wilson} G.V. Wilson {\em The Cartan map on categories of graded modules},
J. Algebra {\bf 85} (1983), no. 2, 390--398.

\end{thebibliography}
\end{document}